\newtheorem{Thm}{Theorem}
\newtheorem{Def}[Thm]{Definition}
\newtheorem{Lem}[Thm]{Lemma}
\newtheorem{Cor}[Thm]{Corollary}
\newtheorem{Alg}[Thm]{Algorithm}
\newtheorem{Ex}[Thm]{Example}
\newtheorem{Prop}[Thm]{Proposition}
\newtheorem{Rem}[Thm]{Remark}
\newcommand{\dubbel}[1]{{\mathbb #1}}
\newcommand{\Z}{{\mathbb Z}}
\newcommand{\ZZ}{\Z_{p^r}}
\newcommand{\ZZN}{\Z_{p^r}^n}
\newcommand{\ZZD}{\Z_{p^r}[D]}
\newcommand{\ZZND}{\Z_{p^r}^n[D]}
\newcommand{\A}{\mathcal{A}_{p}}
\newcommand{\pf}{\textbf{Proof }}
\newcommand{\pfend}{\par\vspace{2ex}\noindent}
\newcommand{\eind}{\hspace*{\fill}$\Box$\par\vspace{2ex}\noindent}
\newcommand{\C}{{\mathcal C}}
\newcommand{\wt}{\mathrm{wt}}
\newcommand{\ie}{\emph{i.e.}}
\newcommand{\pb}{\boldmath $p$}
\newcommand{\im}{\textnormal{Im}}
\begin{document}

\title{Column distance of convolutional codes over $\Z_{p^r}$ %
\thanks{This work was supported by Portuguese funds through the CIDMA -- Center for Research and Development in Mathematics and Applications, and the Portuguese Foundation for Science and Technology (FCT-Funda\c{c}\~{a}o para a Ci\^{e}ncia e a Tecnologia), within project PEst-UID/MAT/04106/2013.}}

\author{
Diego Napp
\quad
Raquel Pinto
\quad
Marisa Toste
\thanks{Diego Napp and Raquel Pinto and Marisa Toste are with CIDMA -- Center for Research and Development in Mathematics and Applications.
  Department of Mathematics.
  University of Aveiro.
  Campus Universitario de Santiago,
  3810-193 Aveiro,
  Portugal}
}

\maketitle

\begin{abstract}
%The column distance is the most fundamental distance measure of convolutional codes \cite[p. 109]{jo99}. 
Rosenthal \emph{et al.} introduced and thoroughly studied the notion of Maximum Distance Profile (MDP) convolutional codes over (non-binary) finite fields refining the classical notion of optimum distance profile, see for instance \cite[p.164]{jo99}. These codes have the property that their column distances are maximal among all codes of the same rate and the same degree. In this paper we aim at studying this fundamental notion in the context of convolutional codes over a finite ring. We extensively use the notion of $p$-encoder to present upper-bounds on the column distances which allow to introduce the notion of MDP in the context of finite rings. A constructive method for (non necessarily free) MDP convolutional codes over $\ZZ$ is presented.

\end{abstract}

%\begin{keywords}
%convolutional code,
%Galois rings,
%generalized Singleton bound,
%maximum distance profile,
%maximum distance separable,
%\end{keywords}

%%%%%%%%%%%%%%
\section{Introduction}

The development of the theory of convolutional codes over rings stems from the fact that these codes are the most appropriate class of codes for phase modulation. Massey and Mittelholzer \cite{massey89} were the first to introduce linear convolutional codes over the residue class ring $\Z_M$, $M$ a positive integer and showed how these codes behave very different from convolutional codes over finite fields. Fundamental results of the structural properties of convolutional codes over finite rings can be found in, for instance, \cite{fa01,jo98}. Fagnani and Zampieri \cite{fa01} studied the theory of convolutional codes over the ring $\ZZ$ in the case the input sequence space is a free module. The problem of deriving minimal encoders (left prime and row-reduced) was posed by Solé \emph{et al.} in \cite{so07} and solved by Kuijper \emph{et al.} in \cite{ku09,ku07} using the concept of minimal $p$-encoder, which is an extension of the concept of $p$-basis introduced in \cite{va96} to the polynomial context.

The search for and design of good convolutional codes over $\ZZ$ has been investigated in several works in the literature. Unit-memory convolutional codes over $\Z_4$ that gives rise to binary trellis codes with high free distances together with several concrete constructions of these codes were reported in \cite{as94,ko95}. In \cite{jo98a} two 16-state trellis codes of rate $\frac{2}{4}$, again over $\Z_4$, were found by computer search. Also worth mentioning are the papers of \cite{so07,sovi_ISIT07} were convolutional codes achieving the Gilbert-Varshamov bound were presented. However, in contrast to the block code case \cite{bebusi16,gu12,no01}, little is known about distance properties and constructions of convolutional codes over large rings.

Recently, in \cite{el13}, a bound on the free distance of convolutional codes over $\Z_{p^r}$ was derived, generalizing the bound given in \cite{ro99a1} for convolutional codes over finite fields. Codes achieving such a bound were called Maximal Distance Separable (MDS). The concrete constructions of MDS convolutional codes over $\Z_{p^r}$ presented in \cite{el13} were restricted to \emph{free} codes and general constructions were built in \cite{NaPiTo16}.

Column distances of convolutional codes over finite fields have been already studied for decades \cite{co69,jo99}. However, the notion of Maximum Distance Profile (MDP) convolutional codes over (non-binary) finite fields have been defined and fully studied by Rosenthal \emph{et al.} in \cite{gl03,Hutchinson2005,Hutchinson2008b,to12}. These codes are characterized by the property that their column distances increase as rapidly as possible for as long as possible. Obviously, fast growth of the column distances is an important property for codes to be used with sequential decoding. For this reason these codes are very appealing for applications (see \cite{to12}) as the maximal possible growth
in the column distances means that these codes have the potential to have a maximal number of errors corrected per time interval. Despite the importance of the notion, column distances of convolutional code over a finite ring are yet unexplored.

In this paper we aim at investigating this notion. In particular, we derive upper-bounds on the column distances and provide explicit novel constructions of (non necessarily free) MDP convolutional codes over $\ZZ$. In the proof of these results, an essential role is played by the theory of $p$-basis and in particular of a canonical form of the $p$-encoders. As for the construction of MDP, in contrast with the papers \cite{no01,el13} where the Hensel lift of a cyclic code was used, in this paper a direct lifting is employed to build MDP convolutional codes over $\ZZ$ from known constructions of MDP convolutional codes over $\Z_p$. Note that by the Chinese Remainder Theorem, results on codes over $\Z_{p^r}$ can be extended to codes over $\Z_M$, see also \cite{carriegos2017,ch94,jo98,McD74}.

The paper is organized as follows: In the next section we introduce some preliminaries on  $p$-basis of $\Z_{p^r}[D]$-submodules of $\Z_{p^r}^n[D]$. After presenting block codes over $\Z_{p^r}$ we introduce the new notions of $p$-standard form and r-optimal parameters. We conclude the preliminaries by defining convolutional codes over $\ZZ$. In section \ref{sec:coldis} we define and study column distances of convolutional codes over $\ZZ$.  Finally, in Section \ref{sec:constructions} we propose a method to build MDP convolutional codes over $Z_{p^r}$. The most technical proofs of our results are in Section \ref{appendix}.

%%%%%%%%%%%%%%
\vspace{3cm}
%%%%%%%%%%%%%%
\section{Preliminaries}
%%%%%%%%%%%%%%
In this section we establish the necessary notions and results needed in order to derive the main results of the paper. Some of these are known in the literature and others are new.

\subsection{$P$-basis and $p$-dimension}

Let $p$ be a prime integer. Any element in $\dubbel{Z}_{p^r}$ can be written uniquely as a linear combination of $1,p,p^2,\dots$ $ \dots, p^{r-1}$, with coefficients in $\mathcal{A}_p=\{0,1, \dots,p-1\} $ (called the $p$-adic expansion of the element) \cite{ca00a}. Note that all elements of $\mathcal{A}_p \backslash\{0\}$ are units. The notion of $p$-basis for $\ZZ$-submodules over $\ZZN$ were first presented in \cite{va96} and later extended for the module $\ZZND$ in \cite{ku07}. These notions will play an important role throughout the paper since they will allow us to analyse the distance properties of convolutional codes over $\dubbel{Z}_{p^r}[D]$.

Let $v_1(D), \dots, v_k(D) $ be in $\dubbel{Z}^n_{p^r}[D]$. The vector $\displaystyle \sum_{j=1}^k a_j(D) v_j(D)$,
with $a_j(D) \in \mathcal{A}_p [D]$, is said to be a {\bf $\boldsymbol{p}$-linear combination} of ${v_1(D), \dots, v_k(D)}$ and the set of all $p$-linear combinations of ${v_1(D), \dots, v_k(D)}$ is called the {\bf $\boldsymbol{p}$-span} of $\{v_1(D), \dots, v_k(D) \}$, denoted by
$p$-span $(v_1(D), \dots, v_k(D))$. An ordered set of vectors $(v_1(D), \dots, v_k(D))$ in $\dubbel{Z}^n_{p^r}[D]$ is said to be a {\bf $\boldsymbol{p}$-generator sequence} if
$p \, v_i(D)$ is a $p$-linear combination of $v_{i+1}(D), \dots, v_k(D)$, $\; \; i=1, \dots, k-1$,
and $p \, v_k(D)=0$.

If $(v_1(D), \dots, v_k(D))$ is a $p$-generator sequence,
$
p\mbox{-span}(v_1(D), \dots, v_k(D))=\mbox{span}(v_1(D), \dots, v_k(D))
$
\cite{ku07}  and consequently the  $p$-span$(v_1(D), \dots, v_k(D))$ is a $\ZZ$-submodule of $\dubbel{Z}^n_{p^r}[D]$. Moreover, note that if $M=\mbox{span}(v_1(D), \dots, v_k(D))$,
\begin{equation}\label{eq:07}
%\begin{split}
(v_1(D), pv_1(D) \dots,  p^{r-1}v_1(D),v_2(D), pv_2(D), \dots, p^{r-1}v_2(D), \dots, v_l(D), pv_k(D) \dots, p^{r-1}v_k(D))
%\end{split}
\end{equation}
is a $p$-generator sequence of $M$.

 %If $M=p$-span$(v_1(D), \dots,v_k(D))$ we say that $(v_1(D), \dots, v_k(D))$ is a $p$-generator sequence of $M$.

The vectors $v_1(D), \dots, v_k(D)$  in $\dubbel{Z}_{p^r}^n[D]$ are said to be {\bf $\boldsymbol{p}$-linearly independent} if the only $p$-linear combination of
$v_1(D), \dots, v_k(D)$ that is equal to $0$ is the trivial one.

An ordered set of vectors $(v_1(D), \dots, v_k(D))$ which is a $p$-generator sequence of $M$ and $p$-linearly independent is said to be a {\bf $\boldsymbol{p}$-basis} of $M$. It is proved in \cite{ku09} that two $p$-bases of a $\dubbel{Z}_{p^r}$-submodule $M$ of $\dubbel{Z}_{p^r}^n[D]$ have the same number of elements. This number of elements is called {\bf $\boldsymbol{p}$-dimension} of $M$.

A nonzero polynomial vector $v(D)$ in $\dubbel Z_{p^r}^n[D]$, written as $v(D)=\sum\limits_{t=0}^{\nu}{v_tD^t}$, with $v_t \in \mathbb Z_{p^r}^n$, and $v_{\nu} \neq 0$, is said to have degree $\nu$, denoted by $deg \, v(D)=\nu$, and $v_{\nu}$ is called the {\bf leading coefficient vector} of $v(D)$, denoted by $v^{lc}$. For a given matrix $G(D)\in \dubbel{Z}_{p^r}^{k \times n}[D]$ we denote by $G^{lc} \in \dubbel{Z}_{p^r}^{k \times n}$ the matrix whose rows are constituted by the leading coefficient of the rows of $G(D)$. A $p$-basis $(v_1(D), \dots, v_k(D))$ is called a {\bf reduced $\boldsymbol{p}$-basis} if the vectors $v_1^{lc}, \dots, v_k^{lc}$ are $p$-linearly independent in $\dubbel Z_{p^r}$, see also \cite{ku11}. %Analogously one can define the notion of reduced basis for a submodule of $\dubbel Z_{p^r}^n[D]$.

%By \cite{ku07} every submodule $M$ of $\dubbel{Z}_{p^r}^n[D]$ has a reduced $p$-basis. %Note that $M$ does not always admit a (reduced) basis.
Every submodule $M$ of $\ZZND$ has a reduced $p$-basis. Algorithm 3.11 in~\cite{ku07} constructs a reduced $p$-basis for a submodule $M$ from a generator sequence of $M$.
%Moreover, any reduced $p$-basis $(v_1(D), \dots, v_k(D))$ of $M$ exhibits the {\bf $\boldsymbol{p}$-predictable degree property} \cite{ku07}:
%$$
%deg \, \left(\sum\limits_{i=1}^{k}{a_i(D)v_i(D)}\right)= \max_{j:a_j(D) \in \mathcal{A}_p[D]\backslash \{0\}}(deg \, a_j(D)+deg \, v_j(D)).
%$$
 The degrees of the vectors of  two reduced $p$-bases of $M$ are the same (up to permutation) and their sum is called the {\bf $\boldsymbol{p}$-degree} of $M$. %Moreover, there always exists a reduced $p$-basis of $M$, $(v_1(D), \dots, v_k(D))$ such that $deg \, v_i(D) \geq deg \, v_{i+1}(D)$, $i=1, \dots, k-1$.

\subsection{Block codes over a finite ring}

%We begin with a brief introduction about block codes.

A {\bf (linear) block code $\C$} of length $n$ over $\ZZ$ is a $\ZZ$-submodule of $\ZZN$ and the elements of $\C$ are called codewords.
A {\bf generator matrix} $\widetilde G\in \ZZ^{\widetilde k \times n}$ of $\C$ is a matrix whose rows form a minimal set of generators of $\C$ over $\ZZ$. If $\widetilde G$ has full row rank, then it is called an {\bf encoder} of $\C$ and $\C$ is a free module.
If $\C$ has $p$-dimension $k$, a {\bf \pb-encoder} $G \in \ZZ^{k \times n}$ of $\C$ is a matrix whose rows form a $p$-basis of $\C$ and therefore
\begin{equation*}
  \begin{split}
     \C & = \im_{\A}G  = \{v=uG \in \ZZN \ : u \in \mathcal{A}^k_p\}.
  \end{split}
\end{equation*}

%Next, we introduce the notion of $p$-standard form that will play an important role in the sequel.
%First we recall the definition of standard form as introduced in \cite{no01}.

Let $\C$ be a block code over $\ZZ$. A generator matrix $\widetilde{G}$ for $\C$ is said to be in {\bf standard form} if
\begin{equation} \label{stand}
  \widetilde{G} = \left[\!\!\!\!
                         \begin{array}{ccccccc}
                            I_{k_0} & A^0_{1,0} & A^0_{2,0} & A^0_{3,0} & \cdots & A^0_{r-1,0} & A^0_{r,0} \\
                            0 & pI_{k_1} & pA^1_{2,1} & pA^1_{3,1} & \cdots & pA^1_{r-1,1} & pA^1_{r,1} \\
                            0 & 0 & p^2I_{k_2} & p^2A^2_{3,2} & \cdots & p^2A^2_{r-1,2} & p^2A^2_{r,2} \\
                            \vdots&\vdots&\vdots&\vdots&\ddots&\vdots&\vdots\\
                            0 & 0 & 0 & 0 & \cdots & p^{r-1}I_{k_{r-1}} & p^{r-1}A^{r-1}_{r,r-1} \\
                         \end{array}
                   \!\!\!\! \right],
\end{equation}

\flushleft
where $I_{k_i}$ denotes the identity matrix of size $k_i$ and the columns are grouped into blocks with $k_0, \dots, k_{r-1}$ and $n-\sum_{i=0}^{r-1}{k_i}$ columns.

\begin{Lem} \label{norton} \cite[Theorem 3.3.]{no01}
  Any nonzero block code $\C$ over $\ZZ$ has, after a suitable permutation of the coordinates, a generator matrix in standard form.
  Moreover, all generator matrices of $\C$ in standard form have the same parameters $k_0, k_1, \dots, k_{r-1}$.
\end{Lem}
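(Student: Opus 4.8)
The plan is to prove existence by a Gaussian-elimination argument tailored to the finite chain ring $\ZZ$, and uniqueness by identifying $k_0,\dots,k_{r-1}$ with intrinsic invariants of the module $\C$. The essential ring-theoretic fact is that $\ZZ$ is a local principal ideal ring whose ideals form the chain $(0)\subsetneq(p^{r-1})\subsetneq\cdots\subsetneq(p)\subsetneq(1)$; every nonzero $x\in\ZZ$ has a well-defined $p$-valuation $v(x)=\MAX\{j:\,p^j\mid x\}$, and $x$ is a unit precisely when $v(x)=0$, which is consistent with the fact (already noted) that every element of $\A\setminus\{0\}$ is a unit.

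For existence I would observe that the two admissible operations are arbitrary $\ZZ$-linear row operations on a generator matrix (these preserve $\C$) together with permutations of columns (these replace $\C$ by a coordinate-permuted, hence equivalent, code). Starting from any generator matrix of $\C$, I would proceed in $r$ phases indexed by $i=0,1,\dots,r-1$. Entering phase $i$ the matrix has, schematically, the block shape $\left[\begin{smallmatrix} * & *\\ 0 & p^iH\end{smallmatrix}\right]$, in which the top block rows already realise the diagonal blocks $p^0I_{k_0},\dots,p^{i-1}I_{k_{i-1}}$ and every entry of the active lower-right block is divisible by $p^i$. Inside this active block I would repeatedly locate an entry of valuation exactly $i$, say $p^iu$ with $u$ a unit; an active-row permutation and a column permutation move it to the next pivot position, scaling its row by $u^{-1}$ turns the pivot into $p^i$, and subtracting suitable $\ZZ$-multiples of that pivot row from the other active rows clears the pivot column while keeping all active entries divisible by $p^i$. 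These clearing operations use only rows of the current and lower blocks, so they neither disturb the previously built identity blocks nor the zeros below them, and the entries they leave in the earlier block rows furnish the blocks $A^{i'}_{i,i'}$. When no entry of valuation $i$ remains, every surviving active entry is divisible by $p^{i+1}$ and phase $i+1$ begins. After phase $r-1$ all remaining entries are divisible by $p^r$, hence zero; discarding any zero rows yields a matrix of the form (\ref{stand}), up to the column permutation accumulated along the way.

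For uniqueness I would read off from (\ref{stand}) that $\C\cong\bigoplus_{i=0}^{r-1}(\Z_{p^{r-i}})^{k_i}$: each of the $k_i$ rows of block $i$ generates a cyclic summand of order $p^{r-i}$, and the staircase of identity pivots sitting in pairwise disjoint column blocks, together with the zero blocks below the diagonal, forces this sum to be direct. Consequently the numbers $d_j:=\DIM_{\F_p}\bigl(p^j\C/p^{j+1}\C\bigr)$, $j=0,\dots,r-1$, which depend on $\C$ alone, satisfy $d_j=\sum_{i=0}^{\,r-1-j}k_i$, whence $k_i=d_{r-1-i}-d_{r-i}$ with the convention $d_r:=0$. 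Since the left-hand sides are invariants of $\C$, any two standard-form generator matrices must share the same $k_0,\dots,k_{r-1}$. The same count gives $k_0+\cdots+k_{r-1}=d_0=\DIM_{\F_p}(\C/p\C)$, the minimal number of generators of $\C$, confirming that the rows produced in the existence step indeed form a minimal generating set.

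I expect the main obstacle to lie in the bookkeeping of the existence step: one must check carefully that extracting all valuation-$i$ pivots really leaves an active block divisible by $p^{i+1}$, and that the column-clearing operations preserve both this divisibility and the echelon structure already established in the earlier blocks, without ever reintroducing a lower-valuation entry. The decisive conceptual point is to keep the column operations restricted to permutations, so that they alter only the coordinate ordering and not the code, while allowing unrestricted row operations; this is exactly what makes the elimination valid for codes rather than merely for matrices.
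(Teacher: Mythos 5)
Your proof is correct, but note that the paper itself offers no proof of this lemma: it is imported as Theorem 3.3 of \cite{no01}, so the only meaningful comparison is with that reference, whose argument is essentially the one you give — Gaussian elimination adapted to the chain ring $\ZZ$ (pivoting on entries of minimal $p$-valuation, clearing only within the active block so that the residual entries above the pivots become the blocks $A^{i'}_{i,i'}$) for existence, and intrinsic module invariants for uniqueness. Your uniqueness step is the cleanest part: identifying $k_i$ via $d_j=\dim_{\F_p}\bigl(p^j\C/p^{j+1}\C\bigr)$, $k_i=d_{r-1-i}-d_{r-i}$, makes the parameters manifestly independent of the chosen standard form and of the coordinate permutation (these invariants are preserved under module isomorphism), and the observation that $k_0+\cdots+k_{r-1}=\dim_{\F_p}(\C/p\C)$ is, by Nakayama over the local ring $\ZZ$, exactly what certifies that the surviving rows form a \emph{minimal} generating set, i.e.\ a generator matrix in the sense this paper uses. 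One cosmetic caution: you call the admissible operations ``arbitrary $\ZZ$-linear row operations,'' but only invertible (elementary) row operations preserve the row span; since every operation you actually perform — row permutation, scaling by a unit, adding a $\ZZ$-multiple of one row to another — is elementary, this is a wording issue rather than a gap.
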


Note that a block code over $\ZZ$ is free if and only if the parameters of any generator matrix in standard form are $k_0=\tilde{k}$, $k_i=0$, $i=1, \dots, r-1$. For the purposes of this work, we introduce a novel canonical form that can be considered as the $p$-analog of the standard form for $p$-encoders. %We are now in position to introduce the novel notion of $p$-standard form that will be extensively use throughout the thesis.
%Next we introduce the notion of $p$-standard form that will play an important role in the sequel.

Given a $p$-basis $(v_1, \dots, v_k)$ of $\mathcal{C}$ there are certain operations that can be applied to $(v_1, \dots, v_k)$ so that we obtain another $p$-basis of $\mathcal{C}$. Some of these elementary operations are described in the following lemma which is not difficult to prove, see more details in \cite{TosteTese}.

\begin{Lem} \label{oper}
Let $(v_1, \dots, v_k)$ be a $p$-basis of a submodule $M$ of $\dubbel{Z}^n_{p^r}$. Then,
\begin{enumerate}
  \item If $v'_i=v_i+\sum_{j=i+1}^{k}a_jv_j$, with $a_j \in \mathcal{A}_{p^r}$, then $(v_1, \dots, v_{i-1}, v'_i, v_{i+1}, \dots, v_k)$ is a $p$-basis of $M$.
  \item If $pv_i$ is a $p$-linear combination of $v_j,v_{j+1}, \dots, v_k$, for some $j>i$, then $(v_1, \dots, v_{i-1}, v_{i+1}, \dots, v_{j-1}, v_i, v_j, \dots, v_k)$ is a $p$-basis of $M$.
\end{enumerate}
\end{Lem}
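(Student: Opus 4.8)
The plan is to verify, for each of the two operations, the two defining properties of a $p$-basis separately: that the new ordered set is a $p$-generator sequence of $M$, and that it is $p$-linearly independent. Throughout I will use two facts recalled in the preliminaries: that for a $p$-generator sequence the span and the $p$-span coincide, so that every $\ZZ$-linear combination of its vectors is already a $p$-linear combination of them; and that every nonzero element of $\A$ is a unit of $\ZZ$. I will also use the foundational fact that a $p$-basis of $M$ gives a \emph{unique} representation of each element of $M$ as a $p$-linear combination, so that $|M|=p^k$ (this underlies the well-definedness of the $p$-dimension, cf.\ \cite{ku09}).

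For operation (1), write $W=(v_1,\dots,v_{i-1},v'_i,v_{i+1},\dots,v_k)$. Since $v'_i\in M$ and $v_i=v'_i-\sum_{j>i}a_jv_j$, the sequence $W$ spans $M$. To see that $W$ is a $p$-generator sequence I check the defining condition at each position. For positions after $i$ nothing changes. At position $i$ I compute $pv'_i=pv_i+\sum_{j>i}a_j(pv_j)$; every summand lies in the span of $v_{i+1},\dots,v_k$, which equals its $p$-span, so $pv'_i$ is a $p$-linear combination of $v_{i+1},\dots,v_k$. For a position $l<i$ I argue by downward induction on $l$: assuming the tail of $W$ after position $l$ is already a $p$-generator sequence, its span equals the span of $v_{l+1},\dots,v_k$ and also equals its $p$-span, and since $pv_l$ lies in that span it is a $p$-linear combination of that tail. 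Hence $W$ is a $p$-generator sequence of $M$.

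It remains to prove that $W$ is $p$-linearly independent, and here I avoid substituting $v_i=v'_i-\sum_{j>i}a_jv_j$ into a relation (which would produce coefficients outside $\A$) in favour of a counting argument. Because $W$ is a $p$-generator sequence of $M$, the map $\A^k\to M$ sending $(b_1,\dots,b_k)$ to the corresponding $p$-linear combination of the vectors of $W$ is surjective; being a surjection between finite sets of the same cardinality $p^k=|M|$, it is a bijection, hence injective, which is exactly the $p$-linear independence of $W$. This is the step I expect to be the main subtlety, since $p$-linear independence is \emph{not} the triviality of the kernel of a homomorphism, the coefficient set $\A$ being closed neither under subtraction nor under the ring multiplication, and so it must be deduced from injectivity via cardinality rather than by a direct linear manipulation.

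For operation (2) the reordering leaves the underlying set of vectors unchanged, so $p$-linear independence, which does not depend on the order, is immediate, and the span is unchanged as well. Thus only the $p$-generator property must be rechecked. For every vector other than $v_i$, the set of vectors following it in the new order still contains all the vectors that followed it originally, so the original $p$-linear combination expressing its $p$-multiple remains a valid $p$-linear combination of the new successors (padding with zero coefficients where needed); no appeal to the equality of span and $p$-span is required here. The one genuinely new condition is the one at the relocated vector $v_i$, now placed immediately before $v_j$: its successors in the new order are precisely $v_j,v_{j+1},\dots,v_k$, and the requirement that $pv_i$ be a $p$-linear combination of these is exactly the hypothesis of (2). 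Hence the reordered sequence is again a $p$-generator sequence, which completes the argument.
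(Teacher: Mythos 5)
Your proof is correct, but note that there is no in-paper argument to compare it against: the authors explicitly skip the proof of this lemma, calling it ``not difficult to prove'' and deferring to \cite{TosteTese}, so your writeup supplies details the paper omits. For part (1), your verification of the $p$-generator property (positions after $i$ unchanged; position $i$ via $pv'_i=pv_i+\sum_{j>i}a_j(pv_j)$ landing in the span, hence the $p$-span, of the tail; positions before $i$ by downward induction using the equality of span and $p$-span of each tail) is sound, and you correctly isolate the real subtlety: $p$-linear independence cannot be obtained by substituting $v_i=v'_i-\sum_{j>i}a_jv_j$ into a relation and subtracting, since $\A$ is closed under neither subtraction nor multiplication and the evaluation map $\A^k\to M$ is not a homomorphism. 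Your detour through cardinality (surjectivity of $\A^k\to M$ plus $|\A^k|=p^k=|M|$ forces bijectivity, hence triviality of the preimage of $0$) is valid; its one external ingredient is the unique-representation property of a $p$-basis of a submodule of $\ZZN$, equivalently $|M|=p^k$ with $k$ the $p$-dimension. That is a standard fact, but for the scalar case at hand it is due to \cite{va96} rather than \cite{ku09}, which concerns the polynomial module $\Z_{p^r}^n[D]$; you should correct the attribution. A more self-contained alternative would prove the rewriting lemma underlying that fact---any $\Z_{p^r}$-linear combination of a $p$-generator sequence can be rewritten as a $p$-linear combination whose leading coefficient is unchanged modulo $p$---and then derive a contradiction with the $p$-linear independence of the original basis directly after substitution; your counting argument buys brevity at the cost of that citation. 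Part (2) is complete and correct: $p$-linear independence and span are insensitive to order, every vector other than $v_i$ retains (at least) its original set of successors so its original certificate still works after padding with zero coefficients, and the only new condition, at the relocated $v_i$, is precisely the hypothesis of (2).
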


A generator matrix $\widetilde G$ of $\C$ in standard form as in (\ref{stand}) can be extended (see algorithm below) to obtain \emph{a} $p$-encoder $G$ in the following form:
{
\fontsize{9}{9}\selectfont
\arraycolsep=3pt % default: 5pt
\begin{equation}\label{p-standG0}
 \left[\!\!\!\!
  \begin{array}{ccccccc}
    I_{k_0} & A^0_{1,0} & A^0_{2,0} & A^0_{3,0} & \cdots & A^0_{r-1,0} & A^0_{r,0} \\
    ------&------&------&------&------&------&------ \\
    pI_{k_0} & 0 & pA^0_{2,1} & pA^0_{3,1} & \cdots & pA^0_{r-1,1} & pA^0_{r,1} \\
    0 & pI_{k_1} & pA^1_{2,1} & pA^1_{3,1} & \cdots & pA^1_{r-1,1} & pA^1_{r,1} \\
    ------&------&------&------&------&------&------ \\
    p^2I_{k_0} & 0 & 0 & p^2A^0_{3,2} & \cdots & p^2A^0_{r-1,2} & p^2A^0_{r,2} \\
    0 & p^2I_{k_1} & 0 & p^2A^1_{3,2} & \cdots & p^2A^1_{r-1,2} & p^2A^1_{r,2} \\
    0 & 0 & p^2I_{k_2} & p^2A^2_{3,2} & \cdots & p^2A^2_{r-1,2} & p^2A^2_{r,2} \\
    ------&------&------&------&------&------&------ \\
    \vdots&\vdots&\vdots&\vdots&\cdots&\vdots&\vdots\\
    ------&------&------&------&------&------&------ \\
    p^{r-1}I_{k_0} & 0 & 0 & 0 & \cdots & 0 & p^{r-1}A^0_{r,r-1} \\
    0 & p^{r-1}I_{k_1} & 0 & 0 & \cdots & 0 & p^{r-1}A^1_{r,r-1} \\
    0 & 0 & p^{r-1}I_{k_2} & 0 & \cdots & 0 & p^{r-1}A^2_{r,r-1} \\
    0 & 0 & 0 & p^{r-1}I_{k_3} & \cdots & 0 & p^{r-1}A^3_{r,r-1} \\
    \vdots&\vdots&\vdots&\vdots&\ddots&\vdots&\vdots\\
    0 & 0 & 0 & 0 & \cdots & p^{r-1}I_{k_{r-1}} & p^{r-1}A^{r-1}_{r,r-1} \\
  \end{array}
\!\!\!\! \right].
\end{equation}
}One can verify that the scalars $k_i$, $i=0,1, \dots, r-1$, are equal for all $p$-encoders of $\C$ in this form, \emph{i.e.}, they are uniquely determined for a given code $\mathcal{C} \subset \dubbel{Z}_{p^r}^n$ and coincide with the parameters appearing in (\ref{stand}) for generator matrices in standard form. We call $k_0, k_1, \dots, k_{r-1}$ the {\bf parameters} of $\mathcal{C}$. If $G$ is in such a form we say that $G$ is in the {\bf $\boldsymbol{p}$-standard form}. The $p$-standard form will be a useful tool to prove our results in the same way the standard form was for previous results in the literature, see for instance \cite{ca00a,no01}.

%
%\begin{Rem}
%The $p$-standard form defined above is a particular case of a $p$-basis in row echelon form (see \cite{va96}).
%\end{Rem}

For completeness we include the following straightforward algorithm that transforms a given generator matrix $\widetilde G$ of $\C$ in standard form  into \emph{a} $p$-encoder $G$ in $p$-standard form.

\begin{Alg}\label{alg_p-enc}
\hspace{1cm} \underline{{\em Input data:}} $ \widetilde{G} \leftarrow \left[\!\!\!\!
             \begin{array}{c}
                B_{1,k_0}\\
                pB_{1,k_1}\\
            %    p^2B_{1,k_2}\\
                \vdots\\
                p^{r-1}B_{1,k_{r-1}}\\
             \end{array}
       \!\!\!\! \right]$
generator matrix in standard form, \emph{i.e.}, as in (\ref{stand}), where $B_{1,k_i}\in \ZZ^{k_i \times n}$, for $i=0,1, \dots, r-1$.

\begin{description}
  \item[{\bf \underline{Step 1}}: ] \ \ Expand $\widetilde{G}$ multiplying $p^iB_{1,k_i}$ by $p, p^2, \dots, p^{r-(i+1)} $, with $i=0, \dots, r-2$, resulting in
   $$
  G \leftarrow \left[\!\!\!\!
             \begin{array}{c}
                B'_{1,k_0}\\
                B'_{2,k_0}\\
               % B'_{3,k_0}\\
                \vdots\\
                B'_{r,k_0}\\
                ---\\
                B'_{1,k_1}\\
               % B'_{2,k_1}\\
                \vdots\\
                B'_{r-1,k_1}\\
                ---\\
               % B'_{1,k_2}\\
               % \vdots\\
               % B'_{r-2,k_2}\\
               % ---\\
                \vdots\\
                ---\\
                B'_{1,k_{r-1}}\\
             \end{array}
       \!\!\!\! \right],
  $$
  where $B'_{j,k_i}=p^{i+j-1}B_{1,k_i}$, $j=1, \dots, r-i$, $i=0, \dots, r-1$.

  \item[{\bf \underline{Step 2}}:] \ \  For $j=2, \dots, r-i$ and $i=0, \dots, r-2$ replace
$$
B'_{j,k_i} \rightarrow B'_{j,k_i}-\sum_{t=1}^{j-1}{A^i_{i+t,i}B'_{j-t,k_{i+t}}}.
$$

   \item[{\bf \underline{Step 3}}:] \ \ Reorder the rows in order to have $G$ written in $p$-standard form.

\end{description}

\underline{{\em Output data}}: $G$.
\end{Alg}

\begin{Thm} \label{result_ALG}
Given a generator matrix  $\widetilde{G}$ in standard form as in (\ref{stand}) of a block code $\C$ over $\ZZ$, the Algorithm \ref{alg_p-enc} produces a $p$-encoder $G$ of $\C$ in $p$-standard form. Moreover, if $\C$ has $p$-dimension $k$ then $k= \sum_{i=0}^{r-1} k_i (r-i)$.
\end{Thm}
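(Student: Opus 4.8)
The plan is to prove the statement by tracing the matrix through the three steps of Algorithm~\ref{alg_p-enc} and then upgrading the resulting $p$-generator sequence to a $p$-basis by a cardinality count. Write the $i$-th block of the standard form (\ref{stand}) as $B_{1,k_i}=[\,0\mid\cdots\mid 0\mid I_{k_i}\mid A^i_{i+1,i}\mid\cdots\mid A^i_{r,i}\,]$, so that the rows of $\widetilde G$ are $p^iB_{1,k_i}$, $i=0,\dots,r-1$. First I would observe that Step~1 produces the rows $B'_{j,k_i}=p^{i+j-1}B_{1,k_i}$, each a power-of-$p$ multiple of a row of $\widetilde G$, while the copies with $j=1$ are \emph{exactly} the rows of $\widetilde G$; hence the $\ZZ$-span is unchanged and equals $\C$. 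The total number of rows produced is $N:=\sum_{i=0}^{r-1}k_i(r-i)$, which already isolates the candidate for the $p$-dimension.

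Next I would analyse Step~2. The key observation is the power alignment $B'_{j-t,k_{i+t}}=p^{(i+t)+(j-t)-1}B_{1,k_{i+t}}=p^{i+j-1}B_{1,k_{i+t}}$, so the vectors subtracted from $B'_{j,k_i}$ all carry the \emph{same} factor $p^{m}$ with $m=i+j-1$. Substituting the explicit shape of $B_{1,k_{i+t}}$ (identity block in column-block $i+t$, zeros before it) shows that the correction $\sum_{t=1}^{j-1}A^i_{i+t,i}B'_{j-t,k_{i+t}}$ cancels precisely the entries of $B'_{j,k_i}$ in column-blocks $i+1,\dots,i+j-1$, leaving in the row the leading block $p^{m}I_{k_i}$, then $j-1$ zero column-blocks, and suitably reduced coefficients thereafter, exactly as displayed in the corresponding block of (\ref{p-standG0}). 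Re-indexing by the exponent $m$ and letting Step~3 group the rows by increasing $m$ then reproduces the array (\ref{p-standG0}); this block bookkeeping is the most laborious part, but it is a direct computation. Finally, each $\ZZ$-valued coefficient $A^i_{i+t,i}$ expands $p$-adically as $\sum_\ell a_\ell p^\ell$ with $a_\ell\in\A$, and since $p^\ell B'_{j-t,k_{i+t}}$ is again one of the rows available after Step~1, the Step~2 operations are genuine $p$-linear combinations of the other rows; they therefore fix the $\ZZ$-span $\C$ and, being of the elementary type recorded in Lemma~\ref{oper}, keep the family a $p$-generator sequence.

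I would then verify the $p$-generator sequence property directly in the ordering of (\ref{p-standG0}). Multiplying any row by $p$ raises every displayed power $p^{m}$ to $p^{m+1}$: concretely $p\cdot p^{m}B_{1,k_i}=p^{m+1}B_{1,k_i}$, which is the $(j{+}1)$-th copy of block $i$ (a later, higher-power row), and the Step~2 corrections scale the same way, so $p$ times a row lies in the $p$-span of the later (higher-power) rows, while $p$ times any top row (exponent $r-1$) is $0$. Thus the rows form a $p$-generator sequence of $\C$, whence $p$-span\,$G=\mbox{span}\,G=\C$.

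Finally I would promote this to a $p$-basis by counting. A $p$-generator sequence of length $N$ gives a surjection $\A^N\to\C$, $u\mapsto uG$, so $|\C|\le p^{N}$, with equality exactly when the rows are $p$-linearly independent. On the other hand, reading additive orders off the standard form (\ref{stand})---the $k_i$ generators in block $i$ each have order $p^{r-i}$, cf.\ Lemma~\ref{norton} and \cite{no01}---gives $|\C|=\prod_{i=0}^{r-1}(p^{r-i})^{k_i}=p^{N}$. Hence equality holds, the rows of $G$ are $p$-linearly independent, and $G$ is a $p$-encoder of $\C$ in $p$-standard form; simultaneously the $p$-dimension of $\C$ equals $N=\sum_{i=0}^{r-1}k_i(r-i)$, which is the asserted formula. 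I expect the only real obstacle to be the block-level verification in Step~2 that the claimed zero-blocks actually appear; everything else is a span/permutation argument or the final cardinality count.
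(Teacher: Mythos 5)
Your proof is correct in its overall architecture and shares the paper's skeleton --- trace the three steps of Algorithm \ref{alg_p-enc}, keep track of the spanning/$p$-generator structure, and read the $p$-dimension off the number $N=\sum_{i=0}^{r-1}k_i(r-i)$ of rows --- but it handles the crucial independence step differently. The paper settles everything right after Step 1: by (\ref{eq:07}) the expanded rows form a $p$-generator sequence of $\C$, and the echelon structure of (\ref{stand}) already forces these rows to be $p$-linearly independent, so one has a $p$-encoder \emph{before} Steps 2--3 begin; those steps then preserve the $p$-basis property by Lemma \ref{oper} and merely arrange the $p$-standard form, and Lemma \ref{norton} makes the $k_i$ well defined, giving the formula for $k$. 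You instead carry only the weaker spanning/$p$-generator property through all three steps and obtain $p$-linear independence at the very end by counting: the surjection $\A^N\to\C$, $u\mapsto uG$, together with $|\C|=\prod_{i=0}^{r-1}(p^{r-i})^{k_i}=p^N$, must be a bijection, so $uG=0$ with $u\in\A^N$ forces $u=0$. This is valid (only the easy direction, bijectivity implies that the zero codeword has only the trivial $p$-combination, is needed), it is arguably more elementary, and it yields the dimension formula in the same stroke; its cost is importing the cardinality formula for standard-form codes from \cite{no01}, which encodes the very same echelon-structure fact that the paper's one-line independence claim rests on.

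There is, however, one concrete error in your Step 2 computation, precisely in the part you call the most laborious. With the \emph{original} Step 1 rows on the right-hand side, the correction $\sum_{t=1}^{j-1}A^i_{i+t,i}B'_{j-t,k_{i+t}}$ does \emph{not} cancel all of the column-blocks $i+1,\dots,i+j-1$ of $B'_{j,k_i}$. Subtracting $A^i_{i+1,i}B'_{j-1,k_{i+1}}$ kills block $i+1$ but reintroduces the cross term $-p^{i+j-1}A^i_{i+1,i}A^{i+1}_{i+2,i+1}$ in block $i+2$; the $t=2$ term then cancels the original entry $p^{i+j-1}A^i_{i+2,i}$ of block $i+2$ but leaves that cross term standing, so the one-shot subtraction leaves nonzero residue in blocks $i+2,\dots,i+j-1$ in general. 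The zero blocks of (\ref{p-standG0}) appear only if Step 2 is executed iteratively, in increasing $j$, with the \emph{already updated} rows $B'_{j-t,k_{i+t}}$ used in the correction: those rows have zeros exactly where the cross terms would land, and an easy induction then gives exact cancellation. This iterative reading is the natural one (the paper itself sidesteps the computation entirely by invoking Lemma \ref{oper}), and it rescues your argument essentially verbatim --- the operations remain subtractions of $\ZZ$-multiples of later rows, so the span is unchanged, your $p$-adic-expansion device still applies, and the final counting step is untouched --- but your claim of precise cancellation with the raw Step 1 rows is false as stated.
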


\begin{proof}
From (\ref{eq:07}) we guarantee that, in Step 1  we construct a $p$-generator sequence of $\C$.
The structure of $\widetilde{G}$ defined in (\ref{stand}) allows to state immediately that the rows of ${G}$ are $p$-linearly independent and, therefore ${G}$ is a $p$-encoder of $\C$.
By Lemma \ref{oper}, Step 2 and Step 3 produce a $p$-encoder in $p$-standard form. This, together with Lemma \ref{norton} immediately implies last statement.
\end{proof}

The {\bf free distance $d(\C)$} of a linear block code $\C$ over $\ZZ$ is given by
$$
d(\C)= \min \{\wt (v), v \in \C, v \neq 0 \}
$$
where $\wt (v)$ is the Hamming weight of $v$, \ie, the number of nonzero entries of $v$.
%\end{Def}

Since the last row of a $p$-encoder (or of a generator matrix in standard form) in $p$-standard form is obviously a codeword we can easily recover the Singleton-type upper bound on the free distance of a block code over $\ZZ$ derived in \cite{no01}.

\begin{Thm}
Given a linear block code $\C \subset \ZZN$ with parameters $k_0,\dots,k_{r-1}$, it must hold that
$$
d(\C)\leq n-(k_0 + \dots + k_{r-1})+1.
$$
\end{Thm}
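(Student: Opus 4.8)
The plan is to produce an explicit nonzero codeword of Hamming weight at most $n-(k_0+\cdots+k_{r-1})+1$; since $d(\C)$ is by definition the minimum weight of a nonzero codeword, the bound then follows at once. The Hamming weight is invariant under permutations of the coordinates, and by Lemma~\ref{norton} the code $\C$ has, after a suitable such permutation, a generator matrix $\widetilde{G}$ in the standard form~(\ref{stand}). Each row of $\widetilde{G}$ is a codeword (it is the image under $\widetilde{G}$ of a unit vector), so it suffices to exhibit a single row of small support.

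I would set $K=k_0+\cdots+k_{r-1}$ and let $i^\ast$ be the largest index with $k_{i^\ast}>0$, which exists because $\C\neq 0$. I then select a row, say the last, of the block row of $\widetilde{G}$ indexed by $i^\ast$. By the shape of~(\ref{stand}) this row equals $p^{i^\ast}$ times a vector whose only possibly nonzero entries lie in the $i^\ast$-th identity block and in the trailing non-systematic columns, and within the identity block exactly one entry equals $1$. It is a genuine, nonzero codeword since $p^{i^\ast}\neq 0$ in $\ZZ$.

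The decisive step is the support count. Because $i^\ast$ is maximal with $k_{i^\ast}>0$, every intermediate identity block indexed by $i^\ast+1,\dots,r-1$ has width zero, so the chosen row is supported on just the single coordinate carrying $p^{i^\ast}$ in the identity block together with the $n-K$ coordinates of the non-systematic tail. Hence its weight is at most $1+(n-K)=n-K+1$, which yields $d(\C)\le n-(k_0+\cdots+k_{r-1})+1$.

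The argument is essentially bookkeeping, so I do not anticipate a genuine obstacle. The only point meriting care is the possibility that some parameters $k_i$ vanish: this is precisely why one descends to the last nonzero block row rather than literally taking the bottom row of~(\ref{stand}), and why the intermediate blocks contribute nothing to the weight. This is also the reasoning hinted at by the remark preceding the statement, that the last row of a generator matrix (or $p$-encoder) in standard form is itself a low-weight codeword.
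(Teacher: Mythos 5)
Your proof is correct and follows essentially the same route as the paper, which simply remarks that the last row of a generator matrix in standard form (equivalently, of a $p$-encoder in $p$-standard form) is a codeword of weight at most $n-(k_0+\cdots+k_{r-1})+1$. Your only addition is the explicit handling of the case where trailing parameters $k_i$ vanish (descending to the last nonzero block row), a detail the paper glosses over but which your argument settles cleanly.
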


Among block codes of length $n$ and $p$-dimension $k$, we are interested in the ones with largest possible distance.
For that we need to introduce the notion of an optimal set of parameters of $M$ \cite{TosteTese}.

\begin{Def}
Given an integer $r\geq 1$ and a non-negative integer $k$ we call an ordered set $(k_0, k_1, \cdots, k_{r-1})$, $k_i \in \mathbb{N}_0, \ i=0, \cdots, r-1$ an {\bf $\boldsymbol{r}$-optimal set of parameters} of $k$ if
$$
 k_0+k_1+ \cdots+k_{r-1} = \min_{k=rk'_0+(r-1)k'_1+ \cdots+k'_{r-1}} (k'_0+k'_1+ \cdots+k'_{r-1}).
$$
\end{Def}

Note that when $r$ divides $k$, $(k_0,0,\dots, ,0)$, with $k_0=\frac{k}{r}$, is the unique $r$-optimal set of parameters of $k$. However, in general, the $r$-optimal set of parameters of $k$ is not necessarily unique for a given $k$ and $r$. For instance if $k=25$ and $r=6$, $(4,0,0,0,0,1)$ and $(0,5,0,0,0,0)$ are two possible $6$-optimal set of parameters of $25$. Note that the computation of the $r$-optimal set of parameters is the well-known change making problem \cite{ch70}.

\begin{Lem}\label{opt_parameters} \cite{NaPiTo16}
Let $(k_0, k_1, \cdots, k_{r-1})$ be an $r$-optimal set of parameters of $k$. Then,
$$
k_0+k_1+ \cdots+k_{r-1}=\left\lceil\frac{k}{r}\right \rceil.
$$
\end{Lem}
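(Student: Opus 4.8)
The plan is to prove Lemma~\ref{opt_parameters} by establishing that the minimal value of $k_0 + k_1 + \cdots + k_{r-1}$ subject to the constraint $k = rk_0 + (r-1)k_1 + \cdots + k_{r-1}$ equals exactly $\lceil k/r \rceil$. I would proceed by proving two inequalities: that the minimal sum is at most $\lceil k/r\rceil$ (achievability) and at least $\lceil k/r\rceil$ (a lower bound valid for every admissible representation).

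For the upper bound, I would exhibit an explicit $r$-optimal set of parameters realizing the value $\lceil k/r \rceil$. Writing $k = rq + s$ by the division algorithm, with quotient $q = \lfloor k/r \rfloor$ and remainder $s \in \{0,1,\dots,r-1\}$, the natural candidate is to take $k_0 = q$ and to account for the remainder $s$ using a single coefficient: since the term $(r-1)k_1 + \cdots + k_{r-1}$ can produce any value in $\{1, 2, \dots, r-1\}$ using a single unit among the $k_i$ (namely, setting the coefficient multiplying the weight $s$ equal to $1$ when $s \neq 0$), one verifies $k = r\cdot q + s$ is realized with total sum $q + [s\neq 0] = \lceil k/r\rceil$. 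This shows a valid representation attains $\lceil k/r\rceil$, so the minimum is at most this value.

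For the lower bound, the key observation is that each weight $r-i$ satisfies $r - i \leq r$ for all $i = 0, \dots, r-1$, so in any admissible representation
$$
k = rk_0 + (r-1)k_1 + \cdots + k_{r-1} \leq r(k_0 + k_1 + \cdots + k_{r-1}).
$$
Dividing by $r$ gives $k_0 + \cdots + k_{r-1} \geq k/r$, and since the left-hand side is a non-negative integer, it is at least $\lceil k/r\rceil$. Combining the two inequalities, the minimum over all admissible representations is exactly $\lceil k/r\rceil$, and since $(k_0, \dots, k_{r-1})$ is assumed $r$-optimal, its sum equals this minimum.

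I do not expect any serious obstacle here; the result is essentially a clean consequence of the division algorithm together with the elementary bound $r-i \leq r$. The only point requiring a little care is the achievability step, where one must confirm that the remainder $s$ can indeed be represented by a single unit coefficient among the available weights $r-1, r-2, \dots, 1$ (it can, precisely because $s \leq r-1$), so that the upper bound matches the lower bound exactly rather than exceeding it.
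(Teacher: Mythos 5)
Your proof is correct and complete: the lower bound follows from $r-i\leq r$ applied to any admissible representation $k=rk_0'+(r-1)k_1'+\cdots+k_{r-1}'$ together with integrality of the sum, and the upper bound follows from the explicit representation $k = r\left\lfloor k/r\right\rfloor + s$ with one unit placed on the weight $s$ when $s\neq 0$, which also confirms that the minimum in the definition is taken over a non-empty set. Note that the paper itself offers no proof of Lemma~\ref{opt_parameters} (it is quoted from \cite{NaPiTo16}), so there is no in-paper argument to compare against; your two-inequality argument is the natural, self-contained proof of the statement.
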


%\pf
%Write $k=rb+a$, where $b,a \in \mathbb{N}$ and $a<r$. Note that $a$ can be written as $a=r-i$, for some $1\leq i \leq r$.\\
%If $r|k$ then $a=0$ and necessarily $k_0=\frac{k}{r}$ and $k_j=0$, for $1 \leq j \leq r-1$.\\
%If $r \nmid k$, we can select $k_0=b$, $k_{r-a}=1$ and $k_j=0$, for $j \in \{1, \dots r-1 \} \backslash\{r-a\}$. Hence $k_0+k_1+ \cdots+k_{r-1}=b + 1= \left\lceil\frac{k}{r}\right \rceil$. It is easy to verify that these values  minimize $k_0+k_1+ \cdots +k_{r-1}$ subject to $k=rk_0+(r-1)k_1+ \cdots+k_{r-1}$.
%\eind \pfend

Hence, for a given $\C\subset \ZZ^n$ with $p$-dimension $k$, a Singleton bound can be defined.

\begin{Cor}
Given a block code $\C\subset \ZZN$ and $p$-dimension $k$,
$$
d(\C)\leq n- \left \lceil\frac{k}{r}\right \rceil +1.
$$
\end{Cor}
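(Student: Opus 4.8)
The plan is to combine the Singleton-type bound stated immediately above (which is expressed in terms of the code's own parameters $k_0,\dots,k_{r-1}$) with the observation that these parameters realise $k$ as an admissible representation in the change-making minimisation, and then to invoke Lemma \ref{opt_parameters} to replace the resulting sum of parameters by $\lceil k/r\rceil$.

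First I would recall from Theorem \ref{result_ALG} that the parameters $(k_0,\dots,k_{r-1})$ of $\C$ satisfy $k=\sum_{i=0}^{r-1}k_i(r-i)=rk_0+(r-1)k_1+\cdots+k_{r-1}$. Hence $(k_0,\dots,k_{r-1})$ is one admissible tuple in the minimisation defining an $r$-optimal set of parameters of $k$, so that $k_0+\cdots+k_{r-1}\ge \min_{k=rk'_0+\cdots+k'_{r-1}}(k'_0+\cdots+k'_{r-1})$. By Lemma \ref{opt_parameters} the right-hand side equals $\lceil k/r\rceil$, and therefore $k_0+\cdots+k_{r-1}\ge \lceil k/r\rceil$.

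Next I would substitute this into the preceding Singleton-type theorem, which asserts $d(\C)\le n-(k_0+\cdots+k_{r-1})+1$. Because the sum of parameters is subtracted, the lower bound $k_0+\cdots+k_{r-1}\ge\lceil k/r\rceil$ yields $d(\C)\le n-(k_0+\cdots+k_{r-1})+1\le n-\lceil k/r\rceil+1$, which is exactly the claimed inequality.

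There is no genuine technical obstacle here, since the statement is a direct corollary of results already established. The one point that deserves explicit care is the direction of the inequality: the parameters of a given code need not themselves be $r$-optimal, and Theorem \ref{result_ALG} only guarantees that they furnish \emph{some} representation of $k$, so their sum is merely \emph{at least} the minimal value $\lceil k/r\rceil$. As this sum enters the bound with a negative sign, the lower bound on the sum translates correctly into the stated upper bound on $d(\C)$. I would make sure to invoke Theorem \ref{result_ALG} explicitly so that the legitimacy of the substitution by $\lceil k/r\rceil$ is transparent.
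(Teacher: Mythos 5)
Your proof is correct and follows essentially the same route the paper intends: the paper presents this corollary as an immediate consequence of the preceding Singleton-type theorem together with Lemma \ref{opt_parameters}, leaving the details implicit. Your write-up simply makes explicit the two points the paper glosses over, namely that Theorem \ref{result_ALG} guarantees $k=\sum_{i=0}^{r-1}k_i(r-i)$ so the code's parameters are admissible in the change-making minimisation, and that the resulting lower bound $k_0+\cdots+k_{r-1}\geq\lceil k/r\rceil$ points in the right direction because the sum is subtracted in the bound.
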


This bound also follows from the fact that, for any block code (not necessarily linear) we have that $|\C |\leq (p^r)^{n- d(\C) +1} $, see \cite{no01}, and it can be also find in \cite{el13}.

\subsection{Convolutional codes over a finite ring}

Next we will consider convolutional codes constituted by left compact sequences in $\ZZ$, \emph{i.e.,}  in which the elements of the code will be of the form
$$
\begin{array}{cccc}
w: & \mathbb Z & \rightarrow &  \ZZN \\
& t & \mapsto & w_t
\end{array}
$$
where $w_t=0$ for $t < \ell$ for some $\ell \in \mathbb Z$. These sequences can be represented by Laurent series,
$$
w(D)= \displaystyle \sum_{t=\ell}^\infty w_t D^t\in \ZZ((D)).
$$
Let us denote by $\ZZ(D)$ the \index{Ring of rational matrices} ring of rational matrices defined in $\ZZ$. More precisely, $\ZZ(D)$ is the set
$$
\{\frac{p(D)}{q(D)}: p(D),q(D) \in \ZZD \mbox{ and the coefficient of the smallest power of $D$ in $q(D)$ is a unit}\}.
$$
%modulo the equivalence relation
This last condition allows us to treat a rational function as an equivalence class in the relation
$$
\frac{p(D)}{q(D)} \sim \frac{p_1(D)}{q_1(D)} \mbox{ if and only if } p(D)q_1(D) = p_1(D) q(D).
$$

Note that $\ZZ(D)$ is a subring of $\ZZ((D))$ and, obviously $\ZZ[D]$ is a subring of $\ZZ(D)$.

\vspace{.2cm}

A rational matrix $A(D) \in \ZZ^{\ell \times \ell}(D)$ is invertible if there exists a rational matrix $L(D) \in \ZZ^{\ell \times \ell}(D)$ such that $L(D)A(D)=I$. %%%%\begin{Lem}\underline{\textbf{Raquel 1: Falta referencia...tiramos? nunca é chamado este lemma}}
%%%%Let $A(D) \in \ZZ^{\ell \times \ell}(D)$. The following are equivalent:
%%%%\begin{itemize}
%%%%\item[i)] $A(D)$ is invertible,
%%%%\item[ii)] $\DET \bar{A}(D) \neq 0$,
%%%%\item[iii)] $\bar{A}(D)$ is invertible in $\mathbb Z_p^{\ell \times \ell}(D)$,
%%%%\end{itemize}
%%%%
%%%%\end{Lem}
%%%%
%%%%In fact, if
Moreover, $A(D)$ is invertible if and only if $\bar{A}(D)$ is invertible in $\mathbb Z_p^{\ell \times \ell}(D)$, where $\bar{A}(D)$ represents the projection of $A(D)$ into $\mathbb Z_p(D)$.
%% and $B(D) \in \mathbb Z_p^{\ell \times \ell}(D)$ is such that
%%%%$
%%%%B(D)A(D)=I \; \mbox{mod}\; p,
%%%%$
%%%%then
%%%%$$
%%%%B(D)A(D)=I-pC(D)
%%%%$$
%%%%over $\ZZ((D))$, for some $C(D) \in \ZZ^{\ell \times \ell}(D)$.
%%%%Then the inverse of $A(D)$ is $$L(D)=(I+pC(D)+p^2C(D)^2+ \cdots + p^{r-1}C(D)^{r-1})B(D) \in \ZZ^{\ell \times \ell}(D).$$
%%%%A square polynomial matrix is called \emph{unimodular} if it has a polynomial inverse.
%%%%
%%%%

Most of the literature on convolutional codes over rings considers codewords as elements in the ring of Laurent series \cite{fa96,forneyT93,jo98,ku09,loeligerM96,el13}. We shall adopt this approach and define a {\bf convolutional code $\mathcal{C}$} of length $n$ as a $\ZZ((D))$-submodule of  $\ZZN((D))$ for which there exists a polynomial matrix $\widetilde G(D) \in \ZZ^{\tilde k \times n}[D]$ such that
\begin{eqnarray*}
{\mathcal C} & = & \im_{\ZZ((D))} \widetilde G(D) =  \left\{u(D)\widetilde G(D) \in \ZZN((D)) :\, u(D) \in \Z^{\widetilde k}_p((D))\right\}.
\end{eqnarray*}

The matrix $ \widetilde G(D)$ is called a {\bf generator matrix} of $\C$. If $\widetilde G(D)$ is full row rank then it is called an {\bf encoder} of $\C$. Moreover, if
\begin{eqnarray*}
{\mathcal C}  & = & \im_{{\cal A}_{p}((D))} G(D) =  \left\{u(D)G(D) \in \ZZN((D)) :\, u(D) \in \mathcal{A}^k_p((D))\right\},
\end{eqnarray*}
where $\mathcal{A}_p((D))=\{\sum_{i=s}^{+\infty}{a_iD^i}: a_i \in \mathcal{A}_p \mbox{ and } s \in \mathbb Z\}$, and $G(D) \in \ZZ^{k \times n}[D]$ is a polynomial matrix whose rows form a $p$-basis, then we say that $G(D)$ is a {\bf $\boldsymbol{p}$-encoder} of $\mathcal{C}$ and $\C$ has {\bf $\boldsymbol{p}$-dimension} $k$.

\begin{Rem}
We emphasize that in this paper we do not assume that $\C$ is free. Hence, it is important to underline that there exists convolutional codes that do not admit an encoder. However, they always admit a $p$-encoder. For this reason the notion of $p$-encoder is more interesting and natural than the standard notion of the encoder. The difference is that the input vector takes values in $\mathcal{A}^k_p((D))$ for $p$-encoders whereas for generator matrices it takes values in $\Z^{\widetilde k}_p((D))$. This idea of using a p-adic expansion for the information input vector is already present in, for instance, \cite{ca00a} and was further developed in \cite{va96} introducing the notion of $p$-generator sequence of vectors in $\ZZ$. In \cite{ku09,ku07} this notion was extended to polynomial vectors.
\end{Rem}

Next, we present two straightforward results as lemmas.

\begin{Lem}
If $\widetilde{G}(D) \in \ZZ^{\widetilde{k} \times n} [D]$ is a generator matrix of a convolutional code $\C$ and $X(D) \in \ZZ^{\widetilde{k} \times \widetilde{k}}(D)$ is an invertible rational matrix such that $X(D) \widetilde{G}(D)$ is polynomial, then
$$
\im_{\ZZ((D))} \widetilde{G}(D) = \im_{\ZZ((D))} X(D) \widetilde{G}(D),
$$
which means that $X(D)\widetilde{G}D)$ is also a generator matrix of $\C$.
\end{Lem}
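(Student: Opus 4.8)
The plan is to establish the asserted identity of modules by proving the two inclusions of row spaces separately, the crucial ingredients being that $\ZZ(D)$ is a subring of $\ZZ((D))$ (as already noted in the excerpt) and that the invertibility of $X(D)$ is meant over the rational ring $\ZZ(D)$. Throughout, recall that $\im_{\ZZ((D))}\widetilde{G}(D)$ is by definition the set of all $\ZZ((D))$-linear combinations $u(D)\widetilde{G}(D)$ of the rows of $\widetilde{G}(D)$, with $u(D)\in\ZZ^{\widetilde{k}}((D))$.

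First I would prove $\im_{\ZZ((D))} X(D)\widetilde{G}(D) \subseteq \im_{\ZZ((D))} \widetilde{G}(D)$. Take any element $v(D) = w(D)\, X(D)\widetilde{G}(D)$ with $w(D) \in \ZZ^{\widetilde{k}}((D))$. Since the entries of $X(D)$ lie in $\ZZ(D) \subseteq \ZZ((D))$, the product $w(D)X(D)$ is again a row vector in $\ZZ^{\widetilde{k}}((D))$, and hence $v(D) = \bigl(w(D)X(D)\bigr)\widetilde{G}(D) \in \im_{\ZZ((D))}\widetilde{G}(D)$. The only thing to verify here is that $\ZZ((D))$ is a ring containing the entries of $X(D)$, which is exactly the subring relation recalled above.

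Next I would prove the reverse inclusion using invertibility. By hypothesis there is a rational matrix $L(D) \in \ZZ^{\widetilde{k}\times\widetilde{k}}(D)$ with $L(D)X(D) = I$, so that $\widetilde{G}(D) = L(D)\bigl(X(D)\widetilde{G}(D)\bigr)$. For any $u(D) \in \ZZ^{\widetilde{k}}((D))$ we then have $u(D)\widetilde{G}(D) = \bigl(u(D)L(D)\bigr)\bigl(X(D)\widetilde{G}(D)\bigr)$, and since $u(D)L(D) \in \ZZ^{\widetilde{k}}((D))$ this shows $\im_{\ZZ((D))}\widetilde{G}(D) \subseteq \im_{\ZZ((D))}X(D)\widetilde{G}(D)$. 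Combining the two inclusions yields $\C = \im_{\ZZ((D))}\widetilde{G}(D) = \im_{\ZZ((D))}X(D)\widetilde{G}(D)$.

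Finally, to conclude that $X(D)\widetilde{G}(D)$ is genuinely a generator matrix of $\C$, I would invoke the hypothesis that $X(D)\widetilde{G}(D)$ is polynomial: it is then a matrix in $\ZZ^{\widetilde{k}\times n}[D]$ whose image over $\ZZ((D))$ equals $\C$, which is precisely the definition of a generator matrix. The argument is essentially bookkeeping, and the only real subtlety -- the point I would be most careful about -- is keeping track of which ring the scalar coefficients live in at each step, namely making sure that multiplying a Laurent-series row vector by the rational matrices $X(D)$ or $L(D)$ keeps the result in $\ZZ^{\widetilde{k}}((D))$. Note that the polynomiality assumption is not needed for the module identity itself, but only at the very end, to qualify $X(D)\widetilde{G}(D)$ as a polynomial generator matrix.
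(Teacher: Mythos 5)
Your proof is correct and complete. The paper itself offers no proof of this lemma (it is introduced as one of "two straightforward results"), and your two-inclusion argument --- using the subring relation $\ZZ(D) \subseteq \ZZ((D))$ for the inclusion $\im_{\ZZ((D))} X(D)\widetilde{G}(D) \subseteq \im_{\ZZ((D))}\widetilde{G}(D)$, and the left inverse $L(D)X(D)=I$ from the paper's own definition of invertibility for the reverse inclusion, with polynomiality invoked only to certify $X(D)\widetilde{G}(D)$ as a generator matrix --- is precisely the routine verification the authors left to the reader.
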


\begin{Lem} \cite{QuNaPiTo17}\label{rational encoder}
Let $\C$ be a $\ZZ((D))$-submodule of $\ZZ^n((D))$ given by $\C=\im_{\ZZ((D))}N(D)$, where $N(D) \in \ZZ^{\widetilde k \times n}(D)$. Then $\C$ is a convolutional code, and if $N(D)$ is full row rank, $\C$ is a free code of rank $\widetilde{k}$.
\end{Lem}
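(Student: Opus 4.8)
The plan is to reduce the rational description of $\C$ to a polynomial one by clearing denominators, multiplying $N(D)$ on the left by an invertible \emph{scalar} rational matrix, and then to read off both assertions from the resulting polynomial generator matrix. First I would write each entry as $N_{ij}(D)=p_{ij}(D)/q_{ij}(D)$ with $p_{ij}(D),q_{ij}(D)\in\ZZD$ and with the coefficient of the lowest power of $D$ in $q_{ij}(D)$ a unit of $\ZZ$, and set $q(D)=\prod_{i,j}q_{ij}(D)$. Since the units of $\ZZ$ form a multiplicative group, the coefficient of the lowest power of $D$ in the product $q(D)$ is again a unit; hence $1/q(D)\in\ZZ(D)$, so $q(D)$ is a unit of $\ZZ(D)$ and the scalar matrix $X(D)=q(D)\,I_{\widetilde k}\in\ZZ^{\widetilde k\times\widetilde k}(D)$ is invertible. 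By construction $\widetilde G(D):=X(D)N(D)=q(D)N(D)$ has all of its entries in $\ZZD$, i.e.\ it is polynomial.

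Next I would check that $\im_{\ZZ((D))}N(D)=\im_{\ZZ((D))}\widetilde G(D)$. For one inclusion, any $v(D)=u(D)\widetilde G(D)=(u(D)q(D))N(D)$ with $u(D)\in\ZZ^{\widetilde k}((D))$ lies in $\im_{\ZZ((D))}N(D)$ because $u(D)q(D)\in\ZZ^{\widetilde k}((D))$. For the reverse inclusion, the unit $q(D)$ is invertible already in $\ZZ((D))$, so from $v(D)=u(D)N(D)$ one writes $v(D)=(u(D)q(D)^{-1})\,q(D)N(D)=(u(D)q(D)^{-1})\widetilde G(D)$, which lies in $\im_{\ZZ((D))}\widetilde G(D)$. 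Hence the two images coincide, and since $\widetilde G(D)\in\ZZ^{\widetilde k\times n}[D]$ is polynomial, $\C=\im_{\ZZ((D))}\widetilde G(D)$ is a convolutional code.

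Finally, for the second statement I would note that left multiplication by the unit $q(D)$ preserves linear independence of the rows: if $a(D)\widetilde G(D)=0$ for some nonzero $a(D)$ over $\ZZ((D))$, then $(a(D)q(D))N(D)=0$ with $a(D)q(D)\neq 0$ (multiplying a nonzero element by a unit cannot give $0$), contradicting the full row rank of $N(D)$. Thus $\widetilde G(D)$ is a full-row-rank polynomial matrix, i.e.\ an encoder of $\C$, so $\C$ is free of rank $\widetilde k$. The only step that is not purely formal is the first one: one must ensure that a common denominator can be kept inside the admissible class defining $\ZZ(D)$, that is, that the product of the $q_{ij}(D)$ still has a unit as its lowest-degree coefficient so that $X(D)=q(D)\,I_{\widetilde k}$ is genuinely invertible over $\ZZ(D)$; once this is secured, the remaining manipulations of the two module images and of row independence are immediate.
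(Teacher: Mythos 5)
The paper itself gives no proof of this lemma: it is imported from \cite{QuNaPiTo17}, so there is no internal argument to compare yours against, and your proposal has to be judged on its own. Your denominator-clearing argument is the natural route and is essentially correct. The step you rightly identify as the only non-formal one does hold: the lowest-degree term of a product of polynomials is the product of their lowest-degree terms (every other contribution has strictly larger degree), so the lowest coefficient of $q(D)=\prod_{i,j}q_{ij}(D)$ is a product of units of $\ZZ$, hence a unit; in particular $q(D)\neq 0$ even though $\ZZ$ has zero divisors. Since $q(D)$ is then a unit of $\ZZ(D)$ and $\ZZ(D)$ is a subring of $\ZZ((D))$, its inverse is available in $\ZZ((D))$, which is exactly what your two image inclusions need, and the first assertion of the lemma follows.

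For the second assertion there is one point you should make explicit. Your argument silently reads ``$N(D)$ is full row rank'' as ``the rows of $N(D)$ are $\ZZ((D))$-linearly independent''; with that reading the unit argument is complete, and freeness follows because $u(D)\mapsto u(D)\widetilde G(D)$ is then a bijective $\ZZ((D))$-module map onto $\C$, so $\C\cong \ZZ^{\widetilde k}((D))$. If, however, full row rank is meant over $\ZZ(D)$ --- the ring where the entries of $N(D)$ actually live, and the reading consistent with how rational matrices are treated elsewhere in the paper (e.g.\ in the proof of Theorem \ref{E1}) --- then you need the additional fact that $\ZZ(D)$-independence of the rows implies $\ZZ((D))$-independence. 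This is true but not purely formal over a ring with zero divisors: one way to see it is that both $\ZZ(D)$ and $\ZZ((D))$ are local rings whose maximal ideal $(p)$ is nilpotent, so every nonzero element factors as $p^{i}$ times a unit; by McCoy's rank criterion a $\widetilde k\times n$ matrix over either ring has independent rows exactly when some $\widetilde k\times\widetilde k$ minor is a unit, and an element of $\ZZ(D)$ is a unit of $\ZZ(D)$ if and only if it is a unit of $\ZZ((D))$. Over a field this distinction is invisible, which is probably why it slipped by; over $\ZZ$ it deserves a sentence, after which your proof is complete.
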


%\pf
%Write
%$$
%N(D)=\left[\frac{p_{ij}(D)}{q_{ij}(D)}\right],
%$$
%where $p_{ij}(D), q_{ij}(D) \in \ZZD$, and the coefficient of the smallest power of $D$ in $q_{ij}(D)$ is a unit. Consider the diagonal matrix $Y(D) \in \ZZ^{\widetilde k \times \widetilde k}[D]$ whose element of the row $i$ is the least common multiple of $q_{i1}(D), q_{i2}(D), \dots, q_{i\tilde k}(D)$. Thus $Y(D)$ is invertible and
%$$
%N(D)=Y(D)^{-1}X(D)
%$$
%for some polynomial matrix $X(D) \in \ZZ^{\tilde k \times n}[D]$. Then
%$$
%\Im_{\ZZ((D))}N(D) = \Im_{\ZZ((D))}X(D),
%$$
%which means that $X(D)$ is a generator matrix of $\C$. The last statement of the lemma follows from the fact that $N(D)$ is full row rank if and only if $X(D)$ is full row rank.
%\eind \pfend

A generator matrix $\widetilde G(D) \in \mathbb Z_{p^r}^{\widetilde k \times n}[D]$ is said to be \emph{noncatastrophic} (\cite{ku09}) if for any $u(D) \in \mathbb Z_{p^r}^{\widetilde k}((D))$,
$$
u(D) \widetilde G(D) \in \ZZN[D] \implies u(D) \in \mathbb Z_{p^r}^{\widetilde k}[D].
$$
Note that this property is a characteristic of a generator matrix and not a property of the code. For example in $\mathbb Z_4$, $G_1(D)=[1+D \; \; 1+D]$ and $G_2(D)=[1 \;\; 1]$ are two encoders of the same convolutional code, but $G_2(D)$ is noncatastrophic and $G_1(D)$ is catastrophic. However, there are convolutional codes that
do not admit noncatastrophic generator matrices like shown in the following example \cite{ku09}.

\begin{Ex} \label{noncat} The convolutional code over $\mathbb Z_4$ with encoder $\widetilde G(D)= [1+D \;\; 1+3D]$ does not admit a noncatastrophic encoder.
\end{Ex}

Obviously, a generator matrix that is not full row rank is catastrophic and therefore convolutional codes that are not free do not admit noncatastrophic encoders.

%%%%In order to characterize the convolutional codes that admit noncatastrophic encoders, we need to consider the following definition.
%%%%
%%%%%\begin{Def}
%%%%A polynomial matrix $G(D) \in \mathbb Z_{p^r}^{\widetilde k \times n}[D]$ is said to be \emph{left prime} if for all $X(D) \in \mathbb Z_{p^r}^{\widetilde k \times \widetilde k}[D]$,  $$\widetilde G(D)= X(D) \widetilde G_1(D), \mbox{ for some }  \widetilde G_1(D) \in \mathbb Z_{p^r}^{\widetilde k \times n}[D]  \implies X(D) \mbox{ is unimodular.}$$
%%%%%\end{Def}
%%%%
%%%%\begin{Lem}
%%%%Let $\cal C$ be a free convolutional code and $\widetilde G(D) \in \mathbb Z_{p^r}^{\widetilde k \times n}[D]$ an encoder of $\cal C$. Then $\widetilde G(D)$ is noncatastrophic if and only if its projection $\overline{\widetilde G}(D)$ into $\Z_p[D]$ is left prime modulo $p$.
%%%%\end{Lem}
%%%%
%%%%\begin{proof}
%%%%  \emph{\textbf{Raquel 2: Falta prova}}
%%%%\end{proof}
%%%%
%%%%Obviously, if  $\widetilde G(D)\in \mathbb Z_{p^r}^{\widetilde k \times n}[D]$ is not left prime then its projection into $\ZZ[D]$ will not be left prime modulo $p$. Thus, a free convolutional code $\cal C$ admits a noncatastrophic encoder if the projection of a left prime encoder of $\cal C$ (and therefore of all left prime encoders of $\cal C$) modulo into $\mathbb Z_p[D]$ is left prime modulo $p$.
%%%%

Analogously, we say that a $p$-encoder $G(D) \in \ZZ^{k \times n}[D]$ is said to be noncatastrophic \cite{ku09} if for any $u(D) \in {\cal A}_p^k((D))$,
$$
u(D)G(D) \in \ZZN[D] \implies u(D) \in {\cal A}_p^k[D].
$$

If a convolutional code $\cal C$ admits a noncatastrophic encoder $\widetilde G(D) \in \ZZ^{\widetilde k \times n}[D]$ then it also admits a noncatastrophic $p$-encoder, namely
$$
G(D)=\left[
\begin{array}{c}
\widetilde G(D) \\
p \widetilde G(D)\\
\vdots \\
p^{r-1} \widetilde G(D)
\end{array}
\right].
$$
However, there are convolutional codes that do not admit noncatastrophic encoders but admit noncatastrophic $p$-encoders like it is shown in the next example.

\begin{Ex} Let us consider again the convolutional code $\cal C$ over $\mathbb Z_4$ of Example \ref{noncat}. The $p$-encoder $G(D)=\left[
\begin{array}{cc}
1+D & 1+3D \\
2 & 2
\end{array}
\right]$ of $\cal C$ is noncatastrophic.
\end{Ex}

We call a convolutional code that admits a noncatastrophic $p$-encoder a \textbf{noncatastrophic convolutional code}. Thus, the class of noncatastrophic convolutional codes contain the class of convolutional codes that admit a noncatastrophic encoder. In \cite{ku09} it was conjectured that all the convolutional codes admit a noncatastrophic $p$-encoder and this is still an open problem.

\vspace{.2cm}

Another property of $p$-encoders that is relevant for this work is ``\emph{delay-freeness}". We say that a $p$-encoder $G(D)$ of a convolutional code $\C$ is delay-free if for any $u(D) \in {\cal A}_p^k((D))$ and any $N\in \Z$
$$
\mbox{supp } (u(D)G(D)) \subset [N, + \infty) \implies \mbox{supp } (u(D)) \subset [N, + \infty),
$$
where $\mbox{supp } (v(D))$ denotes the support of $v(D)=\sum v_i D^i$, \emph{i.e.,} $\mbox{supp } (v(D))=\{i: v_i \neq 0\}$.

\begin{Lem} \cite{ku09}
Let $G(D) \in \ZZ^{k \times n}[D]$ be a $p$-encoder. Then $G(D)$ is delay-free if and only if the rows of $G(0)$ are $p$-linearly
independent in $\ZZ^n$.
\end{Lem}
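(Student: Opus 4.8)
The plan is to work coefficient-wise and reduce the delay-free condition to a statement about the constant coefficient matrix $G_0 := G(0)$ alone. Write $G(D) = \sum_{t\geq 0} G_t D^t$ with $G_t \in \ZZ^{k \times n}$, and for a nonzero input $u(D) = \sum_i u_i D^i \in \AK((D))$, $u_i \in \A^k$, let $s = \min \mathrm{supp}(u(D))$ be its delay, so that $u_s \neq 0$ and $u_i = 0$ for $i < s$. The crucial observation, which I would isolate first, is that the lowest possibly-nonzero coefficient of the product is
$$
\big[u(D)G(D)\big]_s = \sum_{i \leq s} u_i G_{s-i} = u_s G_0,
$$
since every cross term with $i < s$ vanishes because $u_i = 0$ there, and terms with $i>s$ do not contribute as $G(D)$ is polynomial. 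Thus the coefficient of $D^s$ in $u(D)G(D)$ equals $u_s G_0$, which is a $p$-linear combination of the rows of $G_0$ with coefficient vector $u_s \in \A^k$.

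For sufficiency I would assume the rows of $G(0)$ are $p$-linearly independent and verify the defining implication (the case $u(D)=0$ being trivial, so assume $u(D)\neq 0$). Suppose $\mathrm{supp}(u(D)G(D)) \subset [N, +\infty)$ for some $N \in \Z$ but, for contradiction, $\mathrm{supp}(u(D)) \not\subset [N, +\infty)$; then the delay $s$ of $u(D)$ satisfies $s < N$. Since $s < N$, the coefficient of $D^s$ in the product vanishes, so $u_s G_0 = 0$ by the displayed identity. As $u_s \in \A^k$, this exhibits $u_s$ as the coefficient vector of a $p$-linear combination of the rows of $G_0$ equal to zero; $p$-linear independence forces $u_s = 0$, contradicting $s = \min \mathrm{supp}(u(D))$. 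Hence $\mathrm{supp}(u(D)) \subset [N, +\infty)$ and $G(D)$ is delay-free.

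For necessity I would argue by contraposition: assume the rows of $G(0)$ are $p$-linearly dependent, so there is a nonzero $u_0 \in \A^k$ with $u_0 G_0 = 0$. Taking the constant input $u(D) = u_0 \in \AK((D))$ gives $u(D)G(D) = u_0 G(D)$, whose $D^0$-coefficient is $u_0 G_0 = 0$; hence $\mathrm{supp}(u(D)G(D)) \subset [1, +\infty)$. On the other hand $\mathrm{supp}(u(D)) = \{0\} \not\subset [1, +\infty)$. Taking $N = 1$, this violates the delay-free implication, so $G(D)$ is not delay-free, which is the desired contrapositive.

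I do not expect a serious obstacle, as the argument is essentially a minimal-coefficient computation; the one point requiring care — and the place where the ring structure genuinely enters — is that the extracted coefficient vector $u_s$ lies in $\A^k$, so that $u_s G_0 = 0$ is precisely a $p$-linear relation among the rows of $G_0$ and triggers the hypothesis of $p$-linear (in)dependence, rather than an ordinary $\ZZ$-linear relation. Keeping the coefficients in $\A$ throughout is what makes the equivalence match the stated criterion.
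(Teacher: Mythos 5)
Your proof is correct. Note that the paper itself contains no proof of this lemma --- it is imported from \cite{ku09} as a cited result --- so there is no internal argument to compare against; your minimal-coefficient argument (extracting $\big[u(D)G(D)\big]_s = u_s G(0)$ at the delay $s$ for sufficiency, and using a constant input $u_0$ with $u_0 G(0)=0$ for necessity) is the standard way this equivalence is established, and you correctly isolate the one delicate point, namely that $u_s$ lies in $\A^k$, so that $u_s G(0)=0$ is a genuine $p$-linear relation among the rows of $G(0)$ rather than merely a $\ZZ$-linear one.
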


All convolutional codes admit a delay-free $p$-encoder. Moreover, if $\cal C$ is a noncatastrophic convolutional code, then it admits a delay-free and noncatastrophic $p$-encoder which rows form a reduced $p$-basis \cite{ku09}.

%\begin{Def} \cite{ku09}
Let $\cal C$ be a noncatastrophic convolutional code of length $n$ over $\ZZ$ and let $G(D)$ be a delay-free noncatastrophic $p$-encoder of $\C$, such that its rows form a reduced $p$-basis. Then $G(D)$ is called a\textbf{ minimal $p$-encoder} of $\cal C$. The degrees of the rows of $G(D)$ are called the \textbf{$p$-indices} of $\cal C$ and the \textbf{$p$-degree} of $\cal C$ is defined as the sum of the $p$-indices of $\cal C$. Moreover, if $\cal C$ has $p$-dimension $k$ and $p$-degree $\delta$, $\cal C$ is called an $(n,k,\delta)$-convolutional code.
%\end{Def}

\section{Column distance of convolutional codes over a finite ring}\label{sec:coldis}

It is well-known that the free distance is the single most important parameter to determine
the performance of a block code. In the context of convolutional codes there are at least two
fundamental distance properties that are typically analysed, namely the free distance
and the column distance. In this section we formally introduce these two notions and
study convolutional codes that have optimal column distances. The free distance was studied in \cite{NaPiTo16,el13}.

%\begin{Def}
  The {\bf weight} of $v(D)= \sum_{i \in \Z}{v_iD^i}\in \ZZ ((D))$ is given by
  $$
  \wt (v(D))=\sum_{i \in \Z}{\wt (v_i)}.
  $$
%\end{Def}

%\begin{Def}
  The {\bf free distance} of a convolutional code $\C$ is defined as
  $$
  d(\C)=\min\{\wt (v(D)): \, v(D) \in \C, \, v(D) \neq 0\}.
  $$

\begin{Thm} \cite[Theorem 4.10]{el13}   \label{free_d}
The free distance of an $(n,k,\delta)$ convolutional code $\C$ satisfies

\begin{equation}\label{eq:singleton_bound}
d(\C) \leq n\left(\left\lfloor\frac{\delta}{k}\right\rfloor+1\right)-\left\lceil \frac{k}{r}\left(\left\lfloor\frac{\delta}{k}\right\rfloor+1\right)-\frac{\delta}{r}\right\rceil+1.
\end{equation}
\end{Thm}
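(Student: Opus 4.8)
The plan is to deduce this free-distance bound from the Singleton bound for block codes over $\ZZ$ proved above, by restricting $\C$ to a finite window. Fix a minimal $p$-encoder $G(D)$ of $\C$, whose rows $g_1(D),\dots,g_k(D)$ form a reduced $p$-basis with $p$-indices $\nu_1,\dots,\nu_k$ summing to the $p$-degree $\delta$, and set $L=\lfloor \delta/k\rfloor$. First I would consider the set
$$
\C_{[L]}=\{v(D)\in\C:\ \DEG v(D)\le L\},
$$
which, upon identifying $v(D)=v_0+\cdots+v_LD^L$ with $(v_0,\dots,v_L)$, is a $\ZZ$-submodule of $\ZZ^{(L+1)n}$, \emph{i.e.} a block code of length $(L+1)n$. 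The key observation is that every nonzero element of $\C_{[L]}$ is a nonzero codeword of $\C$, so $d(\C)\le d(\C_{[L]})$; thus a Singleton bound for $\C_{[L]}$ yields a bound on the free distance of $\C$.

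Next I would bound from below the $p$-dimension $\kappa$ of $\C_{[L]}$. Taking inputs $u(D)=(u_1(D),\dots,u_k(D))\in\A^k[D]$ subject to $\DEG u_i(D)\le L-\nu_i$, the codeword $u(D)G(D)=\sum_i u_i(D)g_i(D)$ has degree at most $L$ and therefore lies in $\C_{[L]}$. By the unique-representation property of a $p$-basis the map $u(D)\mapsto u(D)G(D)$ is injective on $\A$-coefficient inputs, so
$$
|\C_{[L]}|\ \ge\ p^{\sum_{i:\,\nu_i\le L}(L-\nu_i+1)}\ \ge\ p^{\,k(L+1)-\delta},
$$
using $\sum_{i=1}^k(L-\nu_i+1)=k(L+1)-\delta$; hence $\kappa=\log_p|\C_{[L]}|\ge k(L+1)-\delta\ge 1$. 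Applying the block-code Singleton bound $d\le N-\lceil \kappa/r\rceil+1$ with $N=(L+1)n$, together with the monotonicity of $\lceil\cdot\rceil$, gives
$$
d(\C)\ \le\ d(\C_{[L]})\ \le\ (L+1)n-\Big\lceil\tfrac{k(L+1)-\delta}{r}\Big\rceil+1,
$$
which is exactly (\ref{eq:singleton_bound}) after writing $L=\lfloor\delta/k\rfloor$ and $\tfrac{k(L+1)-\delta}{r}=\tfrac{k}{r}(\lfloor\delta/k\rfloor+1)-\tfrac{\delta}{r}$. As a sanity check, the case $r=1$ collapses to the classical generalized Singleton bound $(n-k)(\lfloor\delta/k\rfloor+1)+\delta+1$.

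The step I expect to be the main obstacle is the $p$-dimension estimate $\kappa\ge k(L+1)-\delta$, where the ring structure genuinely enters. It rests on two facts that are transparent over a field but more delicate over $\ZZ$: that a minimal $p$-encoder has row degrees summing to exactly $\delta$ — so the reduced $p$-basis hypothesis is essential, since a non-reduced $p$-encoder could inflate the degree sum and weaken the count — and that distinct $\A$-coefficient input tuples yield distinct codewords, which is the unique-representation property of $p$-bases established in \cite{va96,ku07}. Verifying rigorously that these degree-restricted inputs inject into $\C_{[L]}$, so that $|\C_{[L]}|\ge p^{\,k(L+1)-\delta}$, is the technical heart; once it is in place, the block-code Singleton bound and the elementary arithmetic above complete the proof.
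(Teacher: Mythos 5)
Your proof is correct. There is, however, nothing in the paper to compare it against: Theorem \ref{free_d} is imported verbatim from \cite[Theorem 4.10]{el13} and no proof of it appears in this paper, so your argument is a genuinely independent, self-contained derivation. It is also the natural one: you reduce the convolutional bound to the paper's own block-code Singleton bound (the Corollary following Lemma \ref{opt_parameters}) by restricting $\C$ to the window $[0,L]$ with $L=\left\lfloor\delta/k\right\rfloor$, which is exactly the ring analogue of the classical Rosenthal--Smarandache windowing argument over fields. The two ring-specific ingredients you single out are indeed the crux, and both hold: minimality of the $p$-encoder guarantees $\sum_i\nu_i=\delta$ (this is precisely how the paper defines the $p$-degree, so your count is legitimate), and the unique-representation property of $p$-bases from \cite{va96,ku07,ku09} gives both the injectivity of $u(D)\mapsto u(D)G(D)$ on inputs in $\A^k[D]$ and the identity $|M|=p^{\kappa}$ for a module of $p$-dimension $\kappa$, which converts your cardinality estimate $|\C_{[L]}|\geq p^{k(L+1)-\delta}$ into $\kappa\geq k(L+1)-\delta$. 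The counting itself is sound: the discarded rows with $\nu_i>L$ contribute nonpositive summands, and $k(L+1)-\delta=k-(\delta\bmod k)\geq 1$ guarantees $\C_{[L]}\neq\{0\}$, so the block bound applies and monotonicity of the ceiling yields exactly (\ref{eq:singleton_bound}).

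Two cosmetic repairs. First, codewords of $\C$ are Laurent series, so define $\C_{[L]}$ as the set of codewords supported in $[0,L]$ rather than ``of degree at most $L$''; by shift-invariance of $\C$ (it is a $\ZZ((D))$-submodule, and $D$ is a unit) this loses nothing. Second, state explicitly that the inputs $u_i(D)$ with $\nu_i>L$ are set to zero, so every codeword you count really lies in the window. Finally, note that you could bypass $p$-dimensions entirely: applying the paper's cardinality bound $|\C|\leq(p^r)^{n-d(\C)+1}$ to $\C_{[L]}$ gives $k(L+1)-\delta\leq r\bigl((L+1)n-d(\C_{[L]})+1\bigr)$, and integrality of the distance then produces the same ceiling, so the only ring-theoretic input genuinely needed is the injectivity coming from unique representation.
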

%\textbf{Raquel 4: verifica por favor estas definições de column distances}\\

Similarly to the field case, we call the bound (\ref{eq:singleton_bound}) the {\bf generalized Singleton bound}. As for column distance \cite{jo99} we define
$$
v(D)|_{[i,i+j]}=v_{i}D^{i}+  v_{i+1} D^{i + 1}+ \cdots + v_{i+j}D^{i+j}
$$
and analogously for $u(D)|_{[i,i+j]}$ for $u(D)=\sum_{\ell \in \Z}^{}u_{\ell}D^{\ell} \in \mathcal{A}_p^k ((D))$. The $j$-th \textbf{column distance} of a $p$-encoder $G(D)$ is defined as
\begin{eqnarray*}
% \nonumber to remove numbering (before each equation)
  d^c_j(G(D)) &=&  \min\{\wt (v(D)|_{[i,i+j]}): \, v(D) =u(D)G(D), \,u_{i}\neq 0 \mbox{ and  }\,u_{\ell}= 0 \mbox{ for } \ell<i  \}\\
   &=&  \min\{\wt (v(D)|_{[0,j]}): \, v(D) =u(D)G(D), \,u_{0}\neq 0 \, \mbox{ and  }u_i=0 \mbox{ for } i<0    \} .
\end{eqnarray*}
This is a property of the $p$-encoder and different $p$-encoders can have different column distances. However, the column distances are invariant under the class of delay-free $p$-encoders of a code and it is equal to
\begin{eqnarray*}
% \nonumber to remove numbering (before each equation)
  d^c_j(G(D)) &=&  \min\{\wt (v(D)|_{[i_{\min},i_{\min}+j]}): \ v(D) \in \C \},
\end{eqnarray*}
where  $v(D)=\sum_{\ell \geq i_{\min}} v_{\ell}D^{\ell} \in \ZZ^n((D))$ with $v_{i_{\min}}\neq 0$.
As every $(n,k,\delta)$-convolutional code $\cal C$ admits a delay-free $p$-encoder, we shall define the $j$-th \textbf{column distance} of $\cal C$,  denoted by $d^c_j(\C)$, as the column distance of one (and therefore all) of its delay-free $p$-encoders. If no confusion arises we use  $d^c_j$ for $d^c_j(\C)$. It is obvious that $d^c_j \leq d^c_{j+1}$ for $j \in \mathbb N_0$. \\

Next definition extends the well-known truncated sliding generator matrix of a convolutional code over a finite field \cite{gl03} to convolutional codes  over finite rings ($\ZZ$ in our case).

Given a $p$-encoder $G(D)=G_0+G_1D+ \dots +G_{\nu}D^\nu \in \Z^{k \times n}_{p^r}[D]$,
    we can define, for every $j \in \mathbb{N}_0$, the {\bf truncated sliding generator matrix} $G^c_j$ as
    $$
    G^c_j=
    \left[
      \begin{array}{cccc}
        G_{0} & G_{1} & \cdots  & G_{j}   \\
              & G_{0} & \cdots  & G_{j-1} \\
              &       & \ddots & \vdots   \\
              &       &        & G_{0}    \\
      \end{array}
    \right] \in \Z^{(j+1)k \times (j+1)n}_{p^r}
    $$
    where $G_\ell=0$ whenever $\ell >\nu$. %[Rosenthal?].
In terms of the truncated sliding generator matrix the column distance reads as follows: Given a delay-free $p$-encoder $G(D)$ of a convolutional code $\C$ over $\ZZ$,
    the $j$-th column distance of $\C$ is given by
    $$
    d^c_j=\min\{\wt(v): v=u\, G^c_j \in \ZZ^{n(j+1)}, \; u=[u_{0} \dots u_{j}]\in \mathcal{A}_p^{k(j+1)}, \; u_{0} \neq 0 \}.
    $$
    for $j \in \mathbb{N}_0$.

%%%%%%%%%%%%%%%%%%%%%%%%%%%%%%%%%%%%%%%%%%%%%%%%%

%%%%%%%%%%%%%%%%%%%%%%%%%%%%%%%%%%%%%%%%%%%%%%%%%%%%%%%%%%%%%%%%%%

Next, we present a result that allows to decompose a convolutional code over $\ZZ$ into simpler components.

\begin{Thm}\label{E1}
Every convolutional code $\C$ over $\ZZ$ admits a generator matrix of the form

\begin{equation}\label{eq:decomposition}
\widetilde{G}(D)= \left[
       \begin{array}{c}
         \widetilde{\mathcal{G}}_0(D) \\
         p\widetilde{\mathcal{G}}_1(D) \\
         \vdots \\
         p^{r-1}\widetilde{\mathcal{G}}_{r-1}(D) \\
       \end{array}
     \right],
\end{equation}
and such that

\begin{equation}\label{eq:decomposition_frr}
\widehat{G}(D)=\left[
       \begin{array}{c}
         \widetilde{\mathcal{G}}_0(D) \\
         \widetilde{\mathcal{G}}_1(D) \\
         \vdots \\
         \widetilde{\mathcal{G}}_{r-1}(D) \\
       \end{array}
     \right]
\end{equation}
is full row rank. Thus,
$$
\C_i:= \im_{\ZZ((D))} \, \widetilde{\mathcal{G}}_i(D)
$$
is a free convolutional code, $i=0, 1, \dots, r-1$, and
\begin{equation}\label{decomposition}
\C=\C_0 \oplus p \C_1 \oplus \dots \oplus p^{r-1} \C_{r-1}.
\end{equation}
\end{Thm}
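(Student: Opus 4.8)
My plan is to build the decomposition by repeatedly peeling off a free "layer" governed by the parameters $k_0, k_1, \dots, k_{r-1}$ of the code, exploiting the $p$-standard form machinery developed in the preliminaries. First I would start from a minimal $p$-encoder $G(D)$ of $\C$ whose rows form a reduced $p$-basis (which exists for delay-free $p$-encoders). The key observation is that the $p$-standard form in (\ref{p-standG0}) already exhibits exactly the block structure we want: the rows naturally group into $r$ families, where the $i$-th family consists of the rows premultiplied by $p^i$. Concretely, I would collect the rows of $G(D)$ (or of a suitably column-permuted generator matrix in standard form) so that $\widetilde{\mathcal{G}}_i(D)$ is the matrix built from the "base" rows appearing at the $p^i$-level, i.e. the rows whose leading structure involves $I_{k_i}$. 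Reading off (\ref{stand})--(\ref{p-standG0}), setting
$$
\widetilde{\mathcal{G}}_i(D) = \left[ I_{k_i} \mid (\text{the corresponding } A\text{-blocks}) \right]
$$
gives matrices such that $p^i \widetilde{\mathcal{G}}_i(D)$ is precisely the $i$-th row block of $\widetilde G(D)$, yielding (\ref{eq:decomposition}).

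The crucial structural point to verify is that $\widehat{G}(D)$ in (\ref{eq:decomposition_frr}) is full row rank. Here I would argue that because each $\widetilde{\mathcal{G}}_i(D)$ contains an identity block $I_{k_i}$ sitting in a distinct column group (the groups are disjoint by the block-column partition of the standard form), stacking them without the $p^i$ factors produces a matrix whose leading coefficient matrix $\widehat{G}^{lc}$ has a staircase of identity blocks in disjoint columns. Such a matrix is full row rank over $\ZZ$ — indeed it is full row rank already over the field $\Z_p$ after projection, which by the invertibility criterion stated in the preliminaries suffices. Since each $\widetilde{\mathcal{G}}_i(D)$ is individually full row rank, Lemma \ref{rational encoder} guarantees that $\C_i = \im_{\ZZ((D))} \widetilde{\mathcal{G}}_i(D)$ is a free convolutional code of rank $k_i$, establishing the freeness claims.

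Finally, for the direct-sum decomposition (\ref{decomposition}), I would show both inclusions. The inclusion $\C = \C_0 + p\C_1 + \dots + p^{r-1}\C_{r-1}$ is immediate from (\ref{eq:decomposition}): any codeword is a $\ZZ((D))$-combination of the rows of $\widetilde G(D)$, and grouping by row block expresses it as a sum $\sum_i p^i u_i(D)\widetilde{\mathcal{G}}_i(D)$ with each $u_i(D)\widetilde{\mathcal{G}}_i(D) \in \C_i$. For directness, suppose $w_0(D) + p w_1(D) + \dots + p^{r-1}w_{r-1}(D) = 0$ with $w_i(D) \in \C_i$; I would reduce modulo $p$ to force $w_0(D) \equiv 0$, and since $\widetilde{\mathcal{G}}_0(D)$ is full row rank over $\Z_p$ (its identity block $I_{k_0}$ survives projection) this forces the corresponding coefficient vector, hence $w_0(D)$, to vanish, then divide by $p$ and iterate. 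The main obstacle I anticipate is precisely this last step: over a ring with zero divisors, "dividing by $p$" and concluding that the coefficient vector lies in $\mathcal{A}_p^{k_i}((D))$ rather than merely in $\Z_{p^{r-i}}((D))$ requires care, and one must check that the level-$i$ component genuinely contributes $p^i$-torsion that is annihilated by $p^{r-i}$ consistent with the free rank $k_i$. The disjointness of the identity-block column supports across the different $\widetilde{\mathcal{G}}_i$ is what ultimately makes this induction go through cleanly.
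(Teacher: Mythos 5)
Your proposal has a genuine gap at its central step: you read the decomposition off a polynomial generator matrix in standard form, with constant identity blocks $I_{k_i}$ sitting in disjoint column groups. But the canonical forms (\ref{stand}) and (\ref{p-standG0}), and Lemma \ref{norton} behind them, are established only for \emph{block} codes, i.e.\ for constant matrices over $\ZZ$; no polynomial analogue is available for convolutional codes, and in fact none exists. Take the code of Example \ref{noncat}, $\C=\im_{\Z_4((D))}\left[1+D \;\; 1+3D\right]$. A polynomial generator matrix of $\C$ with a constant identity block in one of the two coordinates would have to be $\left[1 \;\; (1+D)^{-1}(1+3D)\right]$ or $\left[(1+3D)^{-1}(1+D) \;\; 1\right]$, and neither non-trivial entry is polynomial (e.g.\ $(1+D)^{-1}(1+3D)=1+2D+2D^2+2D^3+\cdots$ is a genuine power series). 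So the ``staircase of identity blocks'' matrix $\widehat{G}(D)$ you want to exhibit does not exist in general, and the full-row-rank argument built on it collapses. The related idea of grouping the rows of a $p$-encoder by the power of $p$ dividing them and dividing that power out also fails: for the $p$-encoder $\left[\begin{array}{cc}1+D & 1+3D\\ 2 & 2\end{array}\right]$ of this same code, the stacked matrix $\left[\begin{array}{cc}1+D & 1+3D\\ 1 & 1\end{array}\right]$ has determinant $2D$, a zero divisor, hence is \emph{not} full row rank (the row vector $\left[2 \;\; 2+2D\right]$ lies in its left kernel), and correspondingly the sum is not direct: $(2+2D,\,2+2D)$ is a nonzero element of $\C_0\cap 2\C_1$. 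Obtaining simultaneously the shape (\ref{eq:decomposition}) \emph{and} full row rank of (\ref{eq:decomposition_frr}) is precisely the hard content of the theorem, and your argument assumes it.

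The paper's proof supplies exactly the mechanism you are missing: it starts from an arbitrary generator matrix, reduces it modulo $p$ and row-reduces over $\Z_p[D]$ to split off a full-row-rank layer $\widetilde{\mathcal{G}}_0(D)$ plus $p$ times a remainder, iterates on the remainder, and --- crucially --- whenever the partially stacked matrix fails to be full row rank, it repairs this by multiplying by a nonsingular \emph{rational} matrix together with a permutation (pushing the dependent rows down to a higher power of $p$), and then invokes Lemma \ref{rational encoder} to replace the resulting rational generators by polynomial ones spanning the same module. Without this correction-and-repolynomialization step there is no way to force (\ref{eq:decomposition_frr}) to be full row rank. Two smaller points: starting from a minimal $p$-encoder restricts you to noncatastrophic codes (the paper only guarantees reduced-$p$-basis $p$-encoders in that case), whereas the theorem concerns every convolutional code; and your directness argument is off, since reducing $w_0(D)+pw_1(D)+\cdots=0$ modulo $p$ yields only $w_0(D)\in p\,\C_0$, not $w_0(D)=0$. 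Once $\widehat{G}(D)$ is known to be full row rank, directness is immediate: writing $w_i(D)=p^iu_i(D)\widetilde{\mathcal{G}}_i(D)$, the relation
\begin{equation*}
\left[u_0(D) \;\; pu_1(D) \;\; \cdots \;\; p^{r-1}u_{r-1}(D)\right]\widehat{G}(D)=0
\end{equation*}
forces $p^iu_i(D)=0$, hence $w_i(D)=0$, for all $i$.
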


\begin{proof}
Let $\widetilde{G}(D)$ be a generator matrix of $\C$. If $\widetilde{G}(D)$ is full row rank then $\C$ is free and $\C=\C_0$.\\
Let us assume now that $\widetilde{G}(D)$ is not full row rank. Then the projection of $\widetilde{G}(D)$ into $\Z_p[D]$,
$$
\overline{\widetilde{G}}(D) \in \Z_p^{k \times n}[D],
$$
is also not full row rank and there exists a nonsingular matrix $F_0(D) \in \Z_p^{k \times k}[D]$ such that
$$
F_0(D)\overline{\widetilde{G}}(D)=\left[ \begin{array}{c}
                                {\mathcal{G}}_0(D) \\
                                0
                             \end{array}
                      \right] \,\,\,\mbox{mod}\,\,\, p,$$
where ${\mathcal{G}}_0(D)$ is full row rank with rank $\ell_0$. Further, it follows that
$$
{F}_0(D)\widetilde{G}(D)=\left[\begin{array}{c}
                       \widetilde{\mathcal{G}}_0(D) \\
                       p \widehat{\mathcal{G}}_1(D)
                     \end{array}
                \right],
$$
where $\widetilde{\mathcal{G}}_0(D) \in \ZZ^{\ell_0 \times n}[D]$ is such that $\overline{\widetilde{\mathcal{G}}_0}(D)={\mathcal{G}}_0(D)$ and $\widehat{\mathcal{G}}_1(D) \in \ZZ^{(k-\ell_0) \times n}[D]$.
Moreover, since $F_0(D)$ is invertible, $\left[\begin{array}{c}\widetilde{\mathcal{G}}_0(D) \\p \widehat{\mathcal{G}}_1(D) \\\end{array}\right]$ is also a generator matrix of $\C$. Let us now consider $F_1(D) \in \Z_{p}^{(k-\ell_0) \times (k-\ell_0)}[D]$ such that
$$
F_1(D)\overline{\widehat{\mathcal{G}}_1}(D)=\left[\begin{array}{c}
                                          {\mathcal{G}}_1'(D) \\
                                          0 \\
                                        \end{array}
                                  \right]\,\,\, \mbox{mod}\,\,\, p,
$$
where ${\mathcal{G}'}_1(D)$ is full row rank with rank $\tilde \ell_1$ and
$$
{F}_1(D)\widehat{\mathcal{G}}_1(D)=\left[\begin{array}{c}
                                {\mathcal{G}}''_1(D) \\
                                 p \widehat{\mathcal{G}}_2(D)
                               \end{array}
                         \right],
$$
with $\mathcal{G}''_1(D) \in \ZZ^{\widetilde \ell_1 \times n}[D]$ such that $\overline{\mathcal{G}''_1}(D) = \mathcal{G}'_1(D)$ and $\widehat{\mathcal{G}}_2(D) \in \ZZ^{(k-\ell_0 - \widetilde \ell_1 ) \times n}[D]$. Hence,
$$
\left[
  \begin{array}{cc}
    I_{\ell_0} & 0 \\
    0 & F_1(D)
  \end{array}
\right]F_0(D)\widetilde{G}(D)=
\left[
  \begin{array}{c}
    \widetilde{\mathcal{G}}_0(D) \\
    p\mathcal{G}''_1(D) \\
    p^2\widehat{\mathcal{G}}_2(D)
  \end{array}
\right].
$$
If $\left[
  \begin{array}{c}
    \widetilde{\mathcal{G}}_0(D) \\
    \mathcal{G}''_1(D)
  \end{array}
\right]$ is not full row rank, then there exists a permutation matrix $P$ and a rational matrix $L_1(D) \in \ZZ^{\widetilde \ell_1 \times \ell_0}(D)$ such that
$$
P\left[
  \begin{array}{cc}
    I_{\ell_0} & 0 \\
    L_1(D) & I_{\widetilde \ell_1}
  \end{array}
\right]\left[
  \begin{array}{c}
    \widetilde{\mathcal{G}}_0(D) \\
    p\mathcal{G}''_1(D)
  \end{array}
\right]=
\left[
  \begin{array}{c}
    \widetilde{\mathcal{G}}_0(D) \\
    p\mathcal{G}'''_1(D) \\
    p^2\mathcal{G}'_2(D)
  \end{array}
\right],
$$
where $\mathcal{G}'''_1(D) \in \ZZ^{\ell_1 \times n}(D)$ and $\mathcal{G}'_2(D) \in \ZZ^{(\widetilde \ell_1-\ell_1) \times n}(D)$ are rational matrices and $\left[
  \begin{array}{c}
    \widetilde{\mathcal{G}}_0(D) \\
    \mathcal{G}'''_1(D)
  \end{array}
\right]$ is a full row rank rational matrix. Note that since
$$
P\left[
  \begin{array}{cc}
    I_{\ell_0} & 0 \\
    L_1(D) & I_{\widetilde \ell_1}
  \end{array}
\right]
$$
is nonsingular it follows that
$$\im_{\ZZ((D))} \left[
  \begin{array}{c}
    \widetilde{\mathcal{G}}_0(D) \\
    p\mathcal{G}''_1(D)
  \end{array}
\right] =  \im_{\ZZ((D))} \left[
  \begin{array}{c}
    \widetilde{\mathcal{G}}_0(D) \\
    p\mathcal{G}'''_1(D) \\
    p^2\mathcal{G}'_2(D)
  \end{array}
\right].$$
Let $\widetilde{\mathcal{G}}_1(D) \in \ZZ^{\ell_1 \times n}[D]$ and $\mathcal{G}''_2(D)\in \ZZ^{(\widetilde \ell_1-\ell_1) \times n}[D]$ be polynomial matrices (see Lemma \ref{rational encoder}) such that
$$
\im_{\ZZ((D))} \left[
  \begin{array}{c}
    \widetilde{\mathcal{G}}_0(D) \\
    p\mathcal{G}'''_1(D) \\
    p^2\mathcal{G}'_2(D)
  \end{array}
\right] =  \im_{\ZZ((D))} \left[
  \begin{array}{c}
    \widetilde{\mathcal{G}}_0(D) \\
    p\widetilde{\mathcal{G}}_1(D) \\
    p^2\mathcal{G}''_2(D)
  \end{array}
\right].
$$
Then
$
\left[
  \begin{array}{c}
    \widetilde{\mathcal{G}}_0(D) \\
    p\widetilde{\mathcal{G}}_1(D) \\
    p^2\mathcal{G}''_2(D)\\
    p^2\widehat{\mathcal{G}}_2(D)
  \end{array}
\right]
$
is still a generator matrix of $\C$ such that $ \left[ \begin{array}{c}
    \widetilde{\mathcal{G}}_0(D) \\
    \widetilde{\mathcal{G}}_1(D)
  \end{array}
\right] $ is full row rank.\\
Proceeding in the same way we conclude the proof.
\end{proof}

%%%%%%%%%%%%%%%%%%%%%%%%%%%%%%%%%%%%%%%%%%%%%%%%%%%%%%%%%%%%%%%%%%%%%%%%%%%%%%%%%%%%%%%

\begin{Rem}
The decomposition (\ref{decomposition}) could have been derived using the fact that $\ZZ^n((D))$ is a semi-simple module. Note, however, that Theorem \ref{E1} is constructive and its proof provides an algorithm to build the free modules $\C_i$. Moreover, it states that these submodules of $\ZZ^n((D))$ are indeed convolutional codes. Note that submodules of $\ZZ^n((D))$ do not always admit a polynomial or rational set of generators and therefore they are not necessarily convolutional codes.
\end{Rem}

If we denote by $\ell_i$ the rank of $\C_i$ then  $\{\ell_0,\ldots,\ell_{r-1}\}$ are clearly invariants of $\C$. We will call them the \textbf{parameters of the convolutional code }$\C$.

From now on, in order to simplify the exposition, we assume that the generator matrix $\widetilde{G}(D)$ is as in (\ref{eq:decomposition}) and such that $\widehat{G}(D)$ in (\ref{eq:decomposition_frr}) is such that $\widehat{G}(0)$ is full row rank. Hence, we can directly obtain a delay-free $p$-encoder by extending $\widehat{G}(D)$ as

$$
G(D)=\left[
\begin{array}{c}
\widetilde{\mathcal{G}}_0(D) \\
p \, \widetilde{\mathcal{G}}_0(D) \\
p \, \widetilde{\mathcal{G}}_1(D) \\
p^2 \, \widetilde{\mathcal{G}}_0(D) \\
p^2 \, \widetilde{\mathcal{G}}_1(D) \\
p^2 \widetilde{\mathcal{G}}_2(D) \\
\vdots \\
p^{r-1} \, \widetilde{\mathcal{G}}_0(D) \\
\vdots \\
p^{r-1} \widetilde{\mathcal{G}}_{r-1}(D) \\
\end{array}
\right]=\sum_{i \in \mathbb N_0} G_i D^i.
$$

As the rows of $G(0)= G_0$ form a $p$-basis (over $\ZZ$) then the parameters of the block code $\C_0 = \im_{\A} G(0)$ coincide with the parameters of $\C$.
%If we denote by $k_i$ the rank of $\C_i$ then \textbf{CLEARLY the family $\{k_0,\ldots,k_{r-1}\}$ ARE THE SET OF PARAMETERS OF $\C$ WHICH ARE  invariantS }of the code. We will call them the \textbf{parameters of the convolutional code }$\C$. Moreover, it is clear that $\C$ is free if and only if $k_i=0$ for $i=1\ldots r-1$.
Before establishing upper bounds on the column distances of a convolutional code we present a useful result on the truncated sliding matrix $G^c_j$ of $G(D)$. %Clearly, if $G(D)$ is a delay-free $p$-encoder, then the rows of $G(0)$ are $p$-linearly independent, and thus rows of $G^c_j$ are $p$-linearly independent, for $j \in\mathbb{N}_0$. Moreover, we have the following result.

\begin{Prop}\label{lemma:01}
    If  $G(D) \in \ZZ^{k\times n}[D]$ is a $p$-encoder of a convolutional code $ \C$ then the rows of $G^c_j$ form a $p$-generator sequence, for any $j \in \mathbb{N}_0$.
\end{Prop}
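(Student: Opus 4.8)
The plan is to pin down the rows of $G^c_j$ as shifted, truncated copies of the rows of $G(D)$, and then to transport the $p$-generator-sequence structure of the rows of $G(D)$ through this identification, reading off a suitable ordering at the end.

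First I would fix notation. Write $g_1(D),\dots,g_k(D)$ for the rows of $G(D)$, so that $g_i(D)=\sum_t (G_t)_i D^t$, where $(G_t)_i$ denotes the $i$-th row of $G_t$. Inspecting the block structure of $G^c_j$, its rows are precisely the vectors $R_{s,i}:=(D^{s-1}g_i(D))|_{[0,j]}$ for $s=1,\dots,j+1$ and $i=1,\dots,k$, where $R_{s,i}$ is the $i$-th row of the $s$-th block-row (its first $s-1$ blocks vanish and the remaining blocks carry $(G_0)_i,(G_1)_i,\dots$). This reduces the whole statement to computations with the $g_i(D)$ and the truncation operator $|_{[0,j]}$.

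Next I would use that $(g_1(D),\dots,g_k(D))$ is a $p$-basis, hence a $p$-generator sequence, of $\C$: there are $a_{il}(D)=\sum_m a_{il,m}D^m\in\mathcal{A}_p[D]$ with $p\,g_i(D)=\sum_{l>i}a_{il}(D)g_l(D)$ for $i<k$, and $p\,g_k(D)=0$. Since $p$ commutes with the shift and with truncation, $p\,R_{s,i}=(D^{s-1}\,p\,g_i(D))|_{[0,j]}$, and expanding produces a sum of terms $a_{il,m}\,(D^{m+s-1}g_l(D))|_{[0,j]}$. The key point is that such a term equals $a_{il,m}\,R_{m+s,\,l}$ when $m+s\le j+1$ and is $0$ otherwise, because for $m+s>j+1$ the lowest degree of $D^{m+s-1}g_l(D)$ already exceeds $j$ and the truncation annihilates it. Hence $p\,R_{s,i}=\sum_{l>i}\sum_{m+s\le j+1}a_{il,m}\,R_{m+s,\,l}$ is a finite $p$-linear combination of genuine rows of $G^c_j$, all carrying first index $l>i$; and $p\,R_{s,k}=0$ for every $s$.

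Finally I would order the rows of $G^c_j$ primarily by the index $i$ (increasing, from $1$ to $k$) and secondarily by $s$ in any fixed fashion. In this order every row $R_{s,i}$ with $i<k$ has $p\,R_{s,i}$ expressed through rows of strictly larger first index, all of which come later, so the $p$-generator condition holds there; and the last group, $i=k$, satisfies $p\,R_{s,k}=0$, covering the terminal condition on the final row. I expect the only delicate step to be the truncation bookkeeping — arguing cleanly that the overflowing shifts ($m+s>j+1$) disappear under $|_{[0,j]}$, which is exactly what keeps the combination finite and confined to actual rows of $G^c_j$; everything else is linearity together with the choice of ordering.
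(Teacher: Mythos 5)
Your proof is correct, and its computational heart is the same as the paper's: both rest on expanding the polynomial relations $p\,g_i(D)=\sum_{l>i}a_{il}(D)\,g_l(D)$ and $p\,g_k(D)=0$ coefficient-wise, and observing that truncation to degrees $[0,j]$ turns them into $p$-linear relations among the rows of $G^c_j$. The packaging differs: the paper argues by induction on $j$, using that the lower part of $G^c_{j+1}$ is $\left[\,0 \;\middle|\; G^c_j\,\right]$, so that only the first $k$ rows need the expansion at each step; you instead treat all block rows at once through the identification $R_{s,i}=(D^{s-1}g_i(D))|_{[0,j]}$, with the truncation annihilating the overflow terms $m+s>j+1$. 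Your version is arguably cleaner, since it makes the shift structure explicit and dispenses with the induction. One point to adjust: a $p$-generator sequence is an \emph{ordered} notion, and the proposition refers to the rows of $G^c_j$ in their matrix order (block index $s$ first, then row index $i$ within the block), whereas you permute them by grouping on $i$. The permutation is unnecessary and technically proves a reordered variant of the statement (which, to be fair, suffices for the paper's use of the result, as the $p$-span is order-independent). In your identity $p\,R_{s,i}=\sum_{l>i}\sum_{m+s\leq j+1}a_{il,m}\,R_{m+s,l}$ every row on the right has block index $m+s\geq s$ and row index $l>i$, hence lies strictly below $R_{s,i}$ in $G^c_j$ itself; moreover $p\,R_{s,k}=0$ for every $s$, which handles all rows of the last group including the final one. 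So your computation already establishes the statement for the natural row ordering --- just read it off without reordering.
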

\begin{proof}
See appendix.
\end{proof}

%-----------------------------------------apago porque creo que no se usa esta definição---------------------------
%    Let us define
%    $$
%    \C_0=\{v_{0}:v(D)=\sum_{i\geq0}{v_iD^i} \in \C\}.
%    $$
%    It is immediate that $$\C_0= \im_{\A}G_0,$$ for any $p$-encoder $G(D)$ of $\C$.

\begin{Thm} \label{dcj}
Let $\C$ be a $(n,k,\delta)$-convolutional code with parameters $k_0, k_1, \dots, k_{r-1}$. Then, it holds that
$$
d^c_j \leq (j+1)\left(n-\sum_{i=0}^{r-1}k_i\right)+1.
$$
\end{Thm}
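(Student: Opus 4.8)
The plan is to produce, for a conveniently chosen delay-free $p$-encoder $G(D)$ of $\C$, an input $u=[u_0\ \dots\ u_j]\in\A_p^{k(j+1)}$ with $u_0\neq 0$ whose output $v=uG^c_j$ has weight at most $(j+1)(n-\kappa)+1$, where $\kappa=\sum_{i=0}^{r-1}k_i$; since $d^c_j$ is the minimal windowed weight taken over all delay-free $p$-encoders, this suffices. I would first pin down the two endpoints. For $j=0$ the claim is \emph{exactly} the block-code Singleton bound already proved, because $d^c_0=\min\{\wt(u_0G_0):u_0\in\A_p^k\setminus\{0\}\}=d(\C_0)$ with $\C_0=\im_{\A}G(0)$ the block code whose parameters are $k_0,\dots,k_{r-1}$. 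This same remark supplies the leading block of the general construction: a minimum-weight nonzero codeword of $\C_0$ gives $u_0\neq 0$ with $\wt(v_0)\le n-\kappa+1$, and this is what accounts for the additive $+1$.

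Next I would try to imitate the field-case argument. Having fixed $u_0,\dots,u_{\ell-1}$, one chooses $u_\ell$ so that $v_\ell=u_\ell G_0+w_\ell$ has weight at most $n-\kappa$, where $w_\ell=\sum_{i=1}^{\ell}u_{\ell-i}G_i$ is the contribution carried over from the earlier blocks; summing $(n-\kappa+1)+j(n-\kappa)$ would then give the stated estimate. Proposition \ref{lemma:01} is the structural input here: the rows of $G^c_j$ form a $p$-generator sequence, so $\im_{\A}G^c_j$ is a genuine block code over $\ZZ$, and using the decomposition $\C=\C_0\oplus p\C_1\oplus\dots\oplus p^{r-1}\C_{r-1}$ of Theorem \ref{E1} one checks that its parameters are $(j+1)k_0,\dots,(j+1)k_{r-1}$, summing to $(j+1)\kappa$.

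The step I expect to be the genuine obstacle is precisely this per-block cancellation together with the constraint $u_0\neq 0$. Over a field the cancellation works because $G_0$ has full row rank, hence a unit $k\times k$ minor, so one zeroes a fixed set of $k$ systematic coordinates in every block by an exact solve and uses the leftover freedom in the first block to keep $u_0\neq 0$. Over $\ZZ$ this breaks down: in standard form the generator of $\C_0$ has rows scaled by $1,p,\dots,p^{r-1}$, so every $\kappa\times\kappa$ minor is divisible by $p^{\,k_1+2k_2+\dots+(r-1)k_{r-1}}$ and is never a unit. Hence one cannot in general zero $\kappa$ coordinates per block by an exact solve, and indeed the coset $w_\ell+\C_0$ can have minimum weight strictly larger than $n-\kappa$. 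This is also why the Singleton bound for $\im_{\A}G^c_j$ does not by itself close the argument: it bounds $d(\im_{\A}G^c_j)=\min_{u\neq 0}\wt(uG^c_j)$, whereas $d^c_j$ is the more constrained minimum over $u_0\neq 0$, so that inequality runs the wrong way and a codeword must really be exhibited.

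To circumvent this I would argue $p$-adically, by induction on $r$ with the known field bound $d^c_j\le(j+1)(n-k)+1$ as base case. Writing $v=\sum_{s=0}^{r-1}p^s c_s\widetilde{\mathcal G}_s(D)$ via Theorem \ref{E1}, reduction modulo $p$ sees only the free rank-$k_0$ code $\overline{\C_0}$ over $\Z_p$, for which the field bound clears about $k_0$ coordinates per block; on the coordinates already killed modulo $p$ the codeword is divisible by $p$, and the residual problem is governed by a code over $\Z_{p^{r-1}}$ with parameters $k_1,\dots,k_{r-1}$, to which the inductive hypothesis applies. The technical heart—where I expect to spend the most effort—is the bookkeeping that makes the zero-sets chosen at successive $p$-levels \emph{compatible}, so that their union leaves at most $(j+1)(n-\kappa)+1$ nonzero positions rather than merely summing the per-level deficits, while preserving $u_0\neq 0$ from the leading block; Proposition \ref{lemma:01} is what guarantees that the truncated systems remain $p$-generator sequences throughout this layered elimination and so keeps the construction consistent.
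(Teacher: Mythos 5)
Your framing of the task (exhibit one windowed codeword with $u_0\neq 0$ for one delay-free $p$-encoder), your handling of $j=0$, and your diagnosis of the obstacle over $\ZZ$ (no $\kappa\times\kappa$ minor of $G_0$ is a unit, $\kappa=\sum_{i}k_i$, so the field-style per-block exact solve fails) are all correct and consistent with the paper. The genuine gap is exactly the step you defer to ``bookkeeping'': the bottom-up induction on $r$ does not close as described. After clearing roughly $k_0$ coordinates per block modulo $p$, what remains is to cancel a \emph{given} residual vector (the higher $p$-adic digits of $v$ on the already-cleared positions, together with the contributions of $p\widetilde{\mathcal{G}}_1(D),\dots$) on a \emph{given} coordinate set. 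But the inductive hypothesis you invoke --- the column-distance bound for a code over $\Z_{p^{r-1}}$ with parameters $k_1,\dots,k_{r-1}$ --- is only an existence statement: \emph{some} input with nonzero leading block yields \emph{some} low-weight codeword, with no control over where its support lies relative to the zero set fixed at level zero, and no ability to hit a prescribed target. This is the same ``inequality runs the wrong way'' problem you yourself pointed out for the Singleton bound of $\im_{\A}G^c_j$: you need an interpolation/cancellation statement, but you are quoting a minimum-distance statement. As written, the layered induction cannot be completed, and no amount of compatible-zero-set bookkeeping follows from the tools you cite.

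The paper avoids the multi-level structure entirely by working top-down inside a single layer. Take the $p$-encoder built from the decomposition of Theorem \ref{E1}, with $G_0$ in $p$-standard form as in (\ref{p-standG0}). Instead of starting from an arbitrary minimum-weight codeword of $\C_0$, choose $u_0=[0\ \cdots\ 0\ 1]$, i.e.\ select the \emph{last} row of $G_0$; it has the shape $[0\ \cdots\ 0\ p^{r-1}\ p^{r-1}A^{r-1,k}_{r,r-1}]$, so $\wt(v_0)\leq n-\kappa+1$, and --- this is the key point --- it lies in $p^{r-1}\ZZN$. All later inputs $u_\ell$ are then chosen supported on the last $\kappa$ rows of the encoder, which are rows of $p^{r-1}\widetilde{\mathcal{G}}_i(D)$ and hence lie in $p^{r-1}\ZZND$; consequently every carried term $w_\ell=\sum_{i\geq 1}u_{\ell-i}G_i$ lies in $p^{r-1}\ZZN\cong\Z_p^n$. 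Within this layer the last $\kappa$ rows of $G_0$ form $p^{r-1}\left[I_\kappa\ \ast\right]$, a systematic matrix of full row rank over $\Z_p$, so the exact per-block solve that fails over $\ZZ$ succeeds here: for each $\ell\geq 1$ one can cancel the first $\kappa$ coordinates of $w_\ell$, giving $\wt(v_\ell)\leq n-\kappa$ and the total $(j+1)(n-\kappa)+1$. In short, the missing idea is not layered bookkeeping but the choice of $u_0$ that forces the entire computation into the top layer $p^{r-1}\ZZN$, where the field-case mechanics you tried to imitate apply verbatim.
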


\begin{proof}
  See appendix.
\end{proof}

The column distance measures the distance between two codewords within a time interval. Column distances are very appealing for sequential decoding: the larger the column distances the larger number of errors we can correct per time interval. Hence we seek for codes with column distances as large as possible.
%Thus, it follows from Theorem \ref{dcj} that the $r$-optimal set of parameters of $k$ has to be such that the value of $k_0$
%has to be the greatest possible.
Selecting an $r$-optimal set of parameters of a given $p$-dimension $k$, $(k_0, k_1, \dots, k_{r-1})$ the following corollary readily follows from Lemma \ref{opt_parameters}.

    \begin{Cor} \label{bound_dcj}
    Given a convolutional code $\C$  with length $n$ and $p$-dim$(\C)=k$ it holds
    $$
    d^c_j \leq \left(n-\left\lceil\frac{k}{r}\right\rceil\right)(j+1)+1.
    $$
    \end{Cor}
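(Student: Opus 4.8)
The plan is to derive the corollary directly from Theorem \ref{dcj}, bounding the sum of the code's parameters from below by means of Lemma \ref{opt_parameters}. The only conceptual ingredient I need is that the parameters $(k_0, \dots, k_{r-1})$ of the code $\C$ form an admissible representation of its $p$-dimension $k$ in the sense of the change-making problem underlying the definition of an $r$-optimal set of parameters.

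First I would recall, from Theorem \ref{result_ALG}, that the parameters of $\C$ satisfy
$$
k = \sum_{i=0}^{r-1}(r-i)\,k_i = r k_0 + (r-1)k_1 + \cdots + k_{r-1}.
$$
This is precisely an expression of $k$ of the form appearing in the definition of an $r$-optimal set of parameters (with $k'_i = k_i$). Consequently $\sum_{i=0}^{r-1} k_i$ is at least the minimum of $\sum_{i=0}^{r-1} k'_i$ taken over all such representations of $k$, and by Lemma \ref{opt_parameters} that minimum equals $\left\lceil k/r \right\rceil$. Hence
$$
\sum_{i=0}^{r-1} k_i \geq \left\lceil \frac{k}{r} \right\rceil.
$$

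Finally, since $j+1 > 0$, substituting this lower bound into the inequality of Theorem \ref{dcj} yields
$$
d^c_j \leq (j+1)\left(n - \sum_{i=0}^{r-1} k_i\right) + 1 \leq (j+1)\left(n - \left\lceil \frac{k}{r} \right\rceil\right) + 1,
$$
which is the claimed bound. I do not expect any genuine obstacle here: the corollary is essentially a uniform restatement of Theorem \ref{dcj} that no longer references the specific parameters of the code. The single point requiring a moment's care is to observe, via Theorem \ref{result_ALG}, that the code's parameters really do constitute an admissible decomposition of $k$, so that the minimality property recorded in Lemma \ref{opt_parameters} applies and delivers the inequality $\sum_{i=0}^{r-1} k_i \geq \left\lceil k/r \right\rceil$.
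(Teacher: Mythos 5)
Your proof is correct and takes essentially the same route as the paper: the corollary is obtained by combining Theorem \ref{dcj} with Lemma \ref{opt_parameters}, using the fact that the parameters of $\C$ constitute an admissible representation of $k$, so that $\sum_{i=0}^{r-1}k_i \geq \left\lceil k/r\right\rceil$. The paper leaves this monotonicity step implicit in a one-line remark; your write-up simply makes it explicit.
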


Let us denote the bound obtained in Corollary \ref{bound_dcj} for the column distance by
$$
B(j)=\left(n-\left\lceil\frac{k}{r}\right\rceil\right)(j+1)+1
$$
and the Singleton bound obtained in Theorem \ref{free_d} for the free distance by

\begin{eqnarray*}
SB &=& n\left(\left\lfloor\frac{\delta}{k}\right\rfloor+1\right)-\left\lceil \frac{k}{r}\left(\left\lfloor\frac{\delta}{k}\right\rfloor+1\right)-
\frac{\delta}{r}\right\rceil+1\\
   &=& \left(n-\frac{k}{r}\right)\left(\left\lfloor\frac{\delta}{k}\right\rfloor+1\right)+\frac{\delta}{r}-\varphi+1,
\end{eqnarray*}

with $\varphi=\left\lceil\frac{k}{r}\left(\left\lfloor\frac{\delta}{k}\right\rfloor+1\right)-
\frac{\delta}{r}\right\rceil-\left(\frac{k}{r}\left(\left\lfloor\frac{\delta}{k}\right\rfloor+1\right)-\frac{\delta}{r}\right)$.

Now we are in position to introduce the notion of maximum distance profile convolutional codes over a finite ring. These codes generalize the notion introduced in \cite{gl03} for maximum distance profile convolutional codes over finite fields to the ring case.

\begin{Def}
An $(n,k,\delta)$-convolutional code $\C$ over $\ZZ$ is said to be {\bf Maximum Distance Profile} (MDP) if
$$
d^c_j=B(j),
$$
for $j\leq L$, where
$$
L=max \{j:B(j) \leq SB\}.
$$
\end{Def}

A simple counting argument leads to the following result  which explicitly determines the value of such an $L$.

\begin{Thm} \label{L}
Let $\C$ be an MDP $(n,k,\delta)$-convolutional code over $\ZZ$ and
$$
X=\frac{\left(n-\frac{k}{r}\right)\left\lfloor\frac{\delta}{k}\right\rfloor+\frac{\delta}{r}-\varphi+
\left\lceil\frac{k}{r}\right\rceil-\frac{k}{r}}{n-\left\lceil\frac{k}{r}\right\rceil}
$$
with $\varphi=\left\lceil\frac{k}{r}\left(\left\lfloor\frac{\delta}{k}\right\rfloor+1\right)-
\frac{\delta}{r}\right\rceil-\left(\frac{k}{r}\left(\left\lfloor\frac{\delta}{k}\right\rfloor+1\right)-\frac{\delta}{r}\right)$. Then
$$
L= \left\lfloor X \right\rfloor.
$$
\end{Thm}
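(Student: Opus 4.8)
The plan is to unwind the definition $L=\max\{j:B(j)\le SB\}$ by solving the inequality $B(j)\le SB$ explicitly for the integer variable $j$, so that the claimed identity $L=\lfloor X\rfloor$ drops out after a routine but careful rearrangement. First I would substitute the closed forms $B(j)=\left(n-\left\lceil\frac{k}{r}\right\rceil\right)(j+1)+1$ and the second expression for $SB$, namely $SB=\left(n-\frac{k}{r}\right)\left(\left\lfloor\frac{\delta}{k}\right\rfloor+1\right)+\frac{\delta}{r}-\varphi+1$, into $B(j)\le SB$. The two trailing constants $+1$ cancel, and expanding the product as $\left(n-\frac{k}{r}\right)\left(\left\lfloor\frac{\delta}{k}\right\rfloor+1\right)=\left(n-\frac{k}{r}\right)\left\lfloor\frac{\delta}{k}\right\rfloor+\left(n-\frac{k}{r}\right)$ leaves
$$
\left(n-\left\lceil\tfrac{k}{r}\right\rceil\right)(j+1)\le \left(n-\tfrac{k}{r}\right)\left\lfloor\tfrac{\delta}{k}\right\rfloor+\left(n-\tfrac{k}{r}\right)+\tfrac{\delta}{r}-\varphi.
$$

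Next, since $n-\left\lceil\frac{k}{r}\right\rceil>0$ for any code with a meaningful distance bound, I would divide by $n-\left\lceil\frac{k}{r}\right\rceil$ without reversing the inequality, obtaining $j+1\le R$, where $R$ denotes the resulting right-hand fraction. The key observation is that the numerator of $R$ exceeds the numerator of $X$ by exactly $\left(n-\frac{k}{r}\right)-\left(\left\lceil\frac{k}{r}\right\rceil-\frac{k}{r}\right)=n-\left\lceil\frac{k}{r}\right\rceil$, i.e. by the common denominator; hence $R=X+1$, and the condition collapses to the clean inequality $j\le X$.

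Finally, because $B(j+1)-B(j)=n-\left\lceil\frac{k}{r}\right\rceil>0$, the map $j\mapsto B(j)$ is strictly increasing, so $\{j\in\mathbb N_0:B(j)\le SB\}$ is an initial segment of $\mathbb N_0$ whose maximum is the largest integer not exceeding $X$; therefore $L=\lfloor X\rfloor$.

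I expect the main obstacle to be purely the bookkeeping with the ceilings: the whole identity hinges on tracking the difference between $\left\lceil\frac{k}{r}\right\rceil$ and $\frac{k}{r}$ (and the analogous fractional term buried inside $\varphi$), which is precisely what makes the numerator of $R$ collapse to $X+1$, so a single sign slip there would spoil the result. A minor side-check is the standing assumption $n>\left\lceil\frac{k}{r}\right\rceil$ together with $X\ge 0$, which jointly guarantee both that the division preserves the inequality and that the maximum defining $L$ is attained.
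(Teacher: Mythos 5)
Your proof is correct and is essentially the "simple counting argument" that the paper alludes to but omits: substituting the closed forms of $B(j)$ and $SB$ into $B(j)\le SB$, dividing by $n-\left\lceil\frac{k}{r}\right\rceil>0$, and noting that the resulting right-hand numerator exceeds the numerator of $X$ by exactly the denominator $n-\left\lceil\frac{k}{r}\right\rceil$, so the condition collapses to $j\le X$, i.e. $j\le\lfloor X\rfloor$ for integer $j$. The side conditions you flag ($n>\left\lceil\frac{k}{r}\right\rceil$ and $X\ge 0$, ensuring the defining set is nonempty and the division is legitimate) are implicit in the paper's setup, so nothing is missing.
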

%
%\begin{proof}
%Let us consider the increasing function $f$ defined by
%$$
%\begin{array}{cccl}
%  f: & \mathbb{R}_0^+ & \longrightarrow & \mathbb{R}_0^+ \\
%     & x & \mapsto & B(x)
%\end{array},
%$$
%with
%$$
%B(x)= \left(n-\left\lceil\frac{k}{r}\right\rceil\right)(x+1)+1
%$$
%where $x \in \mathbb{R}_0^+$.
%We have that
%\begin{eqnarray*}
%f(X) &=&  \left(n-\left\lceil\frac{k}{r}\right\rceil\right)\left(\frac{\left(n-\frac{k}{r}\right)\left\lfloor\frac{\delta}{k}\right\rfloor+
%\frac{\delta}{r}-\varphi+\left\lceil\frac{k}{r}\right\rceil-\frac{k}{r}}{n-\left\lceil\frac{k}{r}\right\rceil}+1\right)+1\\
%     &=& SB
%\end{eqnarray*}
%and, therefore $L=\lfloor X \rfloor$.
%\end{proof}

\section{Constructions of MDP Convolutional Codes over $\ZZ$}\label{sec:constructions}

In this section we will show the existence of MDP convolutional codes over $\ZZ$ for any given set of parameters $(n,k,\delta)$ such that $k \mid \delta$. Moreover, we will do that by building concrete constructions of such codes. In contrast with other existing constructions of convolutional codes over $\ZZ$ with designed distance \cite{no01,el13} where Hensel lifts of a cyclic code were used, we propose a method based on a direct \emph{lifting} of an MDP convolutional code from $\Z_p$ to $\ZZ$. We note that similar lifting techniques can be applied for different set of parameters $(n,k,\delta)$, see for more details \cite{TosteTese}. \\

Given the finite ring $\ZZ$ and the set of parameters $(n,k,\delta)$ with $k \mid \delta$, we aim to construct an MDP  $(n,k,\delta)$-convolutional code $\C$ over $\ZZ$. To this end, denote $k_0= \left\lfloor \frac{k}{r} \right \rfloor$ and $\nu= \frac{\delta}{k} $. Take $\widetilde{k}=k_0 + 1$ and $\widetilde{\delta}=\widetilde{k}\nu $, and let us consider an MDP convolutional code $\widetilde{\C}$ with length $n$, dimension $\widetilde{k}$ and degree $\widetilde{\delta}$ over $\Z_{p}$. Let $\widetilde{G}(D) \in \Z_p^{\widetilde{k} \times n}[D]$ be a minimal basic encoder of $\widetilde{\C}$, \emph{i.e.}, with $\widetilde{G}^{lc}$ full row rank over $\Z_p$ and left prime (constructions of such codes can be found in \cite{ AlmeidaNappPinto2013,gl03, NaRo2015}). Therefore,
%Note that since $\widetilde{\C}$ is MDP, $\widetilde{G}(D)$  must be delay-free and therefore $\widetilde{G}_0$ is full row rank and moreover it follows that its column distances satisfy
\begin{eqnarray*}
% \nonumber to remove numbering (before each equation)
  \widetilde{d^c_j} &=& \min\{\wt (v(D)|_{[0,j]}): \, v(D) =u(D)\widetilde{G}(D), \,u(D)=\sum_{i\in \mathbb{N}_0}u_i D^i \in \Z_p ((D)), \ u_{0}\neq 0    \} \\
   &=&  (j+1)(n- \widetilde{k})+1, \;\; j \leq \widetilde{L}
\end{eqnarray*}
where $\widetilde{L}=\left\lfloor \frac{\widetilde{\delta}}{\widetilde{k}} \right\rfloor + \left\lfloor \frac{\widetilde{\delta}}{n-\widetilde{k}}\right\rfloor$, see \cite{jo99,gl03}.

Let $R=k - k_0 r$ and decompose $\widetilde{G}(D)$  as

$$
\widetilde{G}(D)=\left[
\begin{array}{c}
\widetilde{\mathcal{G}}_{0}(D) \\
\widetilde{\mathcal{G}}_{{r-R}}(D)
\end{array}
\right]=\sum_{ 0 \leq i \leq \nu } \widetilde{G}_i D^i
$$
where $\widetilde{\mathcal{G}}_{k_0}(D)$ has $k_0$ rows and  $\widetilde{\mathcal{G}}_{k_{r-R}}(D)$ has $1$ row. In the case $r | k$ then $\widetilde{G}(D)=\widetilde{\mathcal{G}}_{0}(D)$. Next, we straightforward expand $\widetilde{G}(D)$ as

\begin{equation} \label{b}
G(D)=\left[
\begin{array}{c}
\widetilde{\mathcal{G}}_{0}(D) \\
p \, \widetilde{\mathcal{G}}_{0}(D) \\
\vdots \\
p^r \, \widetilde{\mathcal{G}}_{0}(D) \\
p^{r-R} \, \widetilde{\mathcal{G}}_{r-R}(D) \\
p^{r-R +1} \, \widetilde{\mathcal{G}}_{r-R}(D) \\
\vdots \\
p^{r-1} \, \widetilde{\mathcal{G}}_{r-R}(D)
\end{array}
    \right]=\sum_{ 0 \leq i \leq \nu } G_i D^i.
\end{equation}

Since $\widetilde{G}^{lc}$ is full row rank over $\Z_p$, it immediately follows that $G(D)$  is a $p$-encoder in reduced form.
\begin{Thm} \label{MDP}
Let $\mathcal{C}$ be a convolutional code over $\ZZ$ with $p$-encoder $G(D)$ as in (\ref{b}). Then, $\C$ is an MDP $(n,k,\delta)$-convolutional code over $\ZZ$.
\end{Thm}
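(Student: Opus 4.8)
The plan is to show that the $p$-encoder $G(D)$ constructed in (\ref{b}) is a minimal $p$-encoder of a convolutional code $\C$ with the required parameters $(n,k,\delta)$, and then to prove that its column distances attain the bound $B(j)$ for all $j \le L$. The parameter bookkeeping comes first: since $G(D)$ is built by lifting the $\Z_p$-encoder $\widetilde G(D)$, stacking $k_0+1$ distinct power-scaled copies of each row, I would verify that the $p$-dimension equals $k = r k_0 + R$ by the identity $k = \sum_{i=0}^{r-1} k_i(r-i)$ from Theorem \ref{result_ALG}, with parameters $(k_0,\dots,k_0,k_0+1,\dots)$ determined by $R = k - k_0 r$. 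Because $\widetilde G^{lc}$ is full row rank over $\Z_p$, the remark preceding the theorem already guarantees $G(D)$ is a $p$-encoder in reduced form; I would additionally check delay-freeness (rows of $G(0)$ are $p$-linearly independent) and that the $p$-degree is $\delta$, so that $\C$ is genuinely an $(n,k,\delta)$-code.

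Next I would establish the value of $L$ for these parameters. Since $k \mid \delta$ we have $\lfloor \delta/k\rfloor = \nu$ and the floor/ceiling corrections simplify; I would invoke Theorem \ref{L} to get an explicit $L$ and, crucially, compare it with the field-level quantity $\widetilde L = \lfloor \widetilde\delta/\widetilde k\rfloor + \lfloor \widetilde\delta/(n-\widetilde k)\rfloor$ for the underlying MDP code $\widetilde\C$ over $\Z_p$. The key numerical claim to confirm is that $L \le \widetilde L$, so that the column-distance optimality of $\widetilde\C$ is available on the whole range $j \le L$ where we need it.

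The heart of the argument is the column-distance computation. Fix $j \le L$ and take any input $u(D)=\sum_\ell u_\ell D^\ell \in \mathcal{A}_p^{k}((D))$ with $u_0 \neq 0$ and $u_\ell = 0$ for $\ell<0$; I want to show $\wt\big((u(D)G(D))|_{[0,j]}\big) \ge B(j)$, since Theorem \ref{dcj} already gives the reverse inequality $d^c_j \le B(j)$. The strategy is to reduce the ring-level weight to a field-level weight: let $p^s$ be the lowest power of $p$ actually appearing in the relevant combination of rows of $G(D)$ picked out by $u_0 \neq 0$. Dividing through by $p^s$ and projecting modulo $p$ produces a \emph{nonzero} truncated codeword of the $\Z_p$-code $\widetilde\C$ (this is where the block structure of (\ref{b}), namely that every row is $p^t\widetilde{\mathcal G}_\bullet(D)$, is essential), whose support is contained in that of the original over $[0,j]$. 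Applying the MDP property of $\widetilde\C$, that projected truncation has weight at least $(j+1)(n-\widetilde k)+1 = (j+1)(n-k_0-1)+1$. The final step is to match this with $B(j) = (j+1)(n-\lceil k/r\rceil)+1$ by checking $\lceil k/r\rceil = k_0+1$ when $R>0$ (and handling the clean case $R=0$, i.e.\ $r\mid k$, separately, where $G(D)=\widetilde{\mathcal G}_0(D)$-lifted and $\lceil k/r\rceil = k_0$).

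The main obstacle I anticipate is the reduction step: justifying carefully that projecting the $p$-adic truncated codeword modulo the appropriate power of $p$ yields a nonzero codeword of $\widetilde\C$ of \emph{no larger} support, without losing weight that would break the bound. The subtlety is that different rows of $G(D)$ carry different powers of $p$, so $u_0 \neq 0$ in $\mathcal{A}_p^k$ may activate rows at several $p$-levels simultaneously; one must argue that the lowest active level is well-defined and that its projection cannot vanish identically on $[0,j]$ — this is exactly where delay-freeness and the full-row-rank of $\widetilde G^{lc}$ are used, together with Proposition \ref{lemma:01} guaranteeing the rows of $G^c_j$ form a $p$-generator sequence so that the weight minimization is governed by genuine $p$-linear combinations. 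I expect the cleanest route is to treat the $R=0$ and $R>0$ cases in parallel, reading off the weight bound from $\widetilde\C$ at each $p$-level and taking the minimum, and I would defer the detailed level-by-level verification to the appendix in the style of Theorem \ref{dcj}.
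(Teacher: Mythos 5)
Your overall strategy coincides with the paper's: scale the truncated codeword by a power of $p$ so that it lands in $p^{r-1}\ZZ^n \cong \Z_p^n$, identify the result with a truncated codeword of $\widetilde{\C}$, and invoke the MDP property of $\widetilde{\C}$; the parameter bookkeeping and the identification of $L$ with $\widetilde{L}$ are also handled as in the paper. But there is a genuine gap at the decisive step. After dividing by the lowest active $p$-level and reducing modulo $p$, the resulting codeword of $\widetilde{\C}$ need not be nonzero at time $0$: take $u_0$ activating only rows of the form $p^{r-1}\widetilde{\mathcal{G}}_{\bullet}(D)$ and $u_1$ activating rows at level $p^0$; then $v_0$ lies in $p^{r-1}\ZZ^n$ while $v_1$ carries unit-level contributions, so the projection at the lowest level kills $v_0$ entirely. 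Your stated safeguard --- that the projection ``cannot vanish identically on $[0,j]$'' --- is true but too weak: nonvanishing \emph{somewhere} only lets you apply the MDP bound of $\widetilde{\C}$ on the window $[i,j]$, where $i$ is the first time index surviving the projection, yielding $\wt \geq (j-i+1)(n-\widetilde{k})+1$, which falls short of $B(j)$ by $i(n-\widetilde{k})$ whenever $i>0$. Delay-freeness and the full row rank of $\widetilde{G}^{lc}$ do not repair this; they guarantee $v_0 \neq 0$, not that $v_0$ survives the projection.

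The paper closes exactly this hole with a splitting-and-recursion argument that your proposal is missing. It defines the order of each coefficient $v_t$, sets $\ell = \max_t ord(v_t)$ and $i = \min\{t : ord(v_t)=\ell\}$, multiplies by $p^{\ell-1}$, and uses the full row rank of $\widetilde{G}_0$ to show that the projected codeword is generated by inputs $\widetilde{u}_t$ vanishing for $t<i$, with $\widetilde{u}_i \neq 0$; the MDP property of $\widetilde{\C}$ then gives $\wt\left(\left[v_i \; \dots \; v_j\right]\right) \geq (n-\widetilde{k})(j-i+1)+1$. Separately, $\left[v_0 \; \dots \; v_{i-1}\right] = \left[u_0 \; \dots \; u_{i-1}\right]G^c_{i-1}$ is itself a truncated codeword of $\C$ with $u_0 \neq 0$, so the same argument applies recursively to the head and gives $\wt\left(\left[v_0 \; \dots \; v_{i-1}\right]\right) \geq (n-\widetilde{k})i+1$. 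Adding the two pieces yields at least $(n-\widetilde{k})(j+1)+1$, which equals $B(j)$ since $\widetilde{k}=\left\lceil \frac{k}{r}\right\rceil$ when $r \nmid k$ (with the obvious adjustment when $r \mid k$). Without this decomposition of the window $[0,j]$ at the index $i$, the reduction to $\Z_p$ that you describe cannot reach the claimed bound.
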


\pf
 \ It is straightforward to verify that $\C$ is an $(n,k,\delta)$-convolutional code. It is left to show that it is an MDP code, \emph{i.e.}, we need to show that
$$
 d^c_j = \left(n-\left\lceil\frac{k}{r}\right\rceil\right)(j+1)+1.
$$
for $j \leq L$ as in Theorem \ref{L}. It is a matter of straightforward computations to verify that since $k \mid \delta$, $L= \widetilde{L} = \left\lfloor \frac{\widetilde{\delta}}{\widetilde{k}} \right\rfloor + \left\lfloor \frac{\widetilde{\delta}}{n-\widetilde{k}}\right\rfloor$ . The $j$-th truncated sliding matrix correspondent to $G(D)$ is

$$
G^c_j=\left[
     \begin{array}{ccccc}
       G_0 & G_1 & \dots  &  G_{j-1} & G_j \\
           &     & \ddots & \vdots   & \vdots \\
           &     &        & G_0      & G_1 \\
           &     &        &          & G_0 \\
     \end{array}
   \right]
$$

Let $u=\left[
    \begin{array}{cccc}
      u_0 & u_1 & \dots & u_j \\
    \end{array}
  \right],
$ with $u_i \in \A^{k}$, $i=0, \dots, j$ and $u_0\neq 0$, and let
$$
v=\left[
    \begin{array}{cccc}
      v_0 & v_1 & \dots & v_j \\
    \end{array}
  \right],
$$
with $v_i \in \Z_p^n$,  $i=0, \dots, j$, such that
$
v=uG^c_j.
$
The ideia of the proof is to multiply $v$ by a power of $p$ such that the resulting nonzero truncated codeword $\widetilde v$ is in $p^{r-1}\ZZ^n$. Since $p^{r-1}\ZZ$ is isomorphic to $\Z_p$ then there exists a truncated nonzero codeword $\widehat v \in \widetilde \C = \im_{\Z_p ((D))} \widetilde G(D)$ such that $\wt(\widehat v)=\wt(\widetilde v)$, and then we can use the fact that $\widetilde \C$ is MDP.

 We define the {\bf order} of $v$, denoted by $ord(v)$, as the $j\in \{ 1,2,\dots, r \}$ such that $p^j v=0 \mbox{ and } p^{j-1} v \neq 0.$ Take
$$
\ell=\max_{\begin{array}{c}
              0 \leq t \leq j
         \end{array}}
{ord(v_t)}
$$
and
$$
i=\min_{\begin{array}{c}
            0 \leq s \leq j
          \end{array}}\{s: ord(v_s)=\ell\}
   =\min_{\begin{array}{c}
            0 \leq s \leq j
          \end{array}}\{s: p^{\ell-1}v_s \neq 0\}.
$$
There exists $\widehat{v}_s \in \A^n$ such that $\tilde{v}_s=p^{\ell-1}v_s=p^{r-1}\hat{v}_s,$ $s=i, \dots, j$ and then

\begin{equation}
\begin{split}
p^{\ell-1}v & =
\left[
  \begin{array}{ccccccc}
    0 & 0 & \dots & 0 & \widetilde{v}_i & \dots & \widetilde{v}_j \\
  \end{array}
\right] =
p^{r-1}
\left[
  \begin{array}{ccccccc}
    0 & 0 & \dots & 0 & \widehat{v}_i & \dots & \widehat{v}_j \\
  \end{array}
\right].
\end{split}
\end{equation}

%Applying the same reasoning as in the proof of Lemma \ref{mds} we conclude that
Now it can be easily checked that

$$
p^{\ell-1}v= p^{r-1} \left[
                       \begin{array}{cccccc}
                         \widetilde{u}_0 & \widetilde{u}_1 & \dots & \widetilde{u}_i & \dots & \widetilde{u}_j \\
                       \end{array}
                     \right]
                    \left[
                       \begin{array}{cccccc}
                         \widetilde{G}_0 & \widetilde{G}_1 & \dots  & \widetilde{G}_i     & \dots  & \widetilde{G}_j \\
                                     & \widetilde{G}_0     & \dots  & \widetilde{G}_{i-1} & \dots  & \widetilde{G}_{j-1} \\
                                     &                     & \ddots & \vdots              &        & \vdots \\
                                     &             &       & \widetilde{G}_0              & \dots  & \widetilde{G}_{j-i} \\
                                     &             &       &                              & \ddots & \vdots \\
                                     &             &       &                              &        & \widetilde{G}_0 \\
                       \end{array}
                     \right],
$$
for some $\widetilde{u}_0, \widetilde{u}_1, \dots, \widetilde{u}_i, \dots, \widetilde{u}_j \in \A^{\widetilde{k}}$,
with $\widetilde{u}_0=\dots=\widetilde{u}_{i-1}=0$, because $\widetilde{G}_0$ is full row rank and therefore,
$$
\left[
  \begin{array}{ccc}
    \widetilde{v}_i & \dots & \widetilde{v}_j \\
  \end{array}
\right]=p^{r-1}
\left[
  \begin{array}{ccc}
     \widetilde{u}_i & \dots & \widetilde{u}_j \\
  \end{array}
\right]
\left[
  \begin{array}{ccc}
    \widetilde{G}_0 & \dots  & \widetilde{G}_{j-i} \\
                    & \ddots & \vdots \\
                    &        & \widetilde{G}_0 \\
  \end{array}
\right]
$$
where $\widetilde{u}_i \neq 0$.
Using the fact that $\widetilde{\C}=\im_{\Z_p[D]}\widetilde{G}(D)$ is MDP we obtain
\begin{equation*}
  \begin{split}
  \wt\left(\left[
  \begin{array}{ccc}
    v_i & \dots & v_j \\
  \end{array}
\right]\right) &
\geq \wt\left(\left[
  \begin{array}{ccc}
    \widetilde{v}_i & \dots & \widetilde{v}_j \\
  \end{array}
\right]\right) \geq  (n- \widetilde{k})(j-i+1)+1.
  \end{split}
  \end{equation*}

Considering
$
\left[
  \begin{array}{ccc}
    v_0 & \dots & v_{i-1}\\
  \end{array}
\right]=
\left[
  \begin{array}{ccc}
    u_0 & \dots & u_{i-1}\\
  \end{array}
\right]G^c_i
$
and reasoning in the same way we conclude that
$$
\wt\left(\left[
  \begin{array}{ccc}
    v_0 & \cdots &  v_{i-1} \\
  \end{array}
\right]\right) \geq  (n- \widetilde{k})i+1
$$
and therefore
$$
\wt\left(\left[
  \begin{array}{ccc}
    v_0 & \cdots & v_j \\
  \end{array}
\right]\right) \geq  (n- \widetilde{k})(j+1)+1.
$$
Consequently, $d^c_j=(n- \widetilde{k})(j+1)+1$, \emph{i.e.},
$$
d^c_j=(n- \left\lceil\frac{k}{r}\right\rceil)(j+1)+1,
$$
for $j \leq L$.
\eind \pfend

\section{Appendix}\label{appendix}

\textbf{\emph{Proof of Proposition \ref{lemma:01}}:} Let us represent $G(D)$ by
$$
    G(D)=
    \left[
      \begin{array}{c}
        g_1(D) \\ g_2(D) \\ \vdots \\ g_k(D)
      \end{array}
    \right]
$$

\noindent where $g_s(D)= \displaystyle \sum_{i \in \mathbb N_0 }g^i_sD^i$, with $s=1, \dots, k$, is the $s$-th row of $G(D)$.
Since $G(D)$ is a $p$-encoder, its rows form a $p$-generator sequence and therefore

\begin{enumerate}
      \item $p\, g_s(D) \in p\mbox{-}span (g_{s+1}(D), \dots, g_k(D))$, $s=1, \dots, k-1;$
      \item $p\, g_k(D)=0$.
\end{enumerate}

Thus, $p\, g_s(0) \in p\mbox{-}span (g_{s+1}(0), \dots, g_k(0))$, $s=1, \dots, k-1,$ and $p\, g_k(0)=0$, which means that the rows of $G^c_0$ form a $p$-generator sequence.

Let us assume now that the rows of $G^c_{j}$ form a $p$-generator sequence and let us prove that the rows of $G^c_{j+1}$ also form a $p$-generator sequence.
For that it is enough to prove that
\begin{equation}\label{T}
  p \,row_s(G^c_{j+1}) \in p\mbox{-}span (row_{s+1}(G^c_{j+1}), \dots, row_{k(j+1)}(G^c_{j+1})),
\end{equation}

$s=1, \dots, k$, where $row_{i}(G^c_{j+1})$ denotes the $i$-th row of $G^c_{j+1}$.

Let $s \in \{1, \dots, k-1 \}$. By condition 1)  there exists $
    a_t(D)=\displaystyle \sum_{i \in \mathbb N_0 }a^i_t D^i \in \mathcal {A}_p[D]$, $t=s+1, \dots, k$, such that
\begin{eqnarray*}
% \nonumber % Remove numbering (before each equation)
  p \, g_s(D) &=& a_{s+1}(D)\cdot g_{s+1}(D)+ a_{s+2}(D)\cdot g_{s+2}(D)+\dots +a_k(D)\cdot g_k(D) \\
    %          &=& \left(\sum_{i \in \mathbb N_0}a_{s+1}^iD^i\right)\left(\sum_{i \in \mathbb N_0}g_{s+1}^iD^i\right)+
     %              \left(\sum_{i \in \mathbb N_0}a_{s+2}^iD^i\right)\left(\sum_{i \in \mathbb N_0}g_{s+2}^iD^i\right)+\\
      %        & &  +\dots + \left(\sum_{i \in \mathbb N_0}a_{k}^iD^i\right)\left(\sum_{i \in \mathbb N_0}g_{k}^iD^i\right),
\end{eqnarray*}
which implies that
%$$ p\, g^l_s = \displaystyle \sum_{\alpha=s+1}^k \displaystyle \sum_{i=0}^l a^i_{\alpha} \cdot g^{l-i}_{\alpha},$$
%for $l=0, \dots, j+1$. Thus

\begin{eqnarray*}
    p \, \left [\begin{array}{cccc}
                     g^0_s & g^1_s & \cdots & g^{j+1}_s
                   \end{array} \right]
    &=&
    a^0_{s+1} \cdot \left[\begin{array}{cccc}
                     g^0_{s+1} & g^1_{s+1} & \cdots & g^{j+1}_{s+1}
                   \end{array} \right] + \cdots +\\
    & &+ a^0_{k} \, \left[\begin{array}{cccc}
                     g^0_{k} & g^1_{k} & \cdots & g^{j+1}_{k}
                   \end{array} \right]+ \\
    & &+a^1_{s+1} \, \left[\begin{array}{cccc}
                     0 & g^0_{s+1} & \cdots & g^{j}_{s+1}
                   \end{array} \right] + \cdots
    a^1_{k} \, \left[\begin{array}{cccc}
                     0 & g^0_{k} & \cdots & g^{j}_{k}
                   \end{array} \right] + \\
    & & + \cdots +\\
    & & + a^{j+1}_{s+1} \, \left[\begin{array}{cccc}
                     0 & \cdots & 0 &  g^{0}_{s+1}
                   \end{array} \right] + \cdots +
    a^{j+1}_{k} \, \left[\begin{array}{cccc}
                     0 & \cdots & 0 & g^{0}_{k}
                   \end{array} \right],
\end{eqnarray*}
which proves (\ref{T}). Finally, let us consider now $s=k$. Since the rows of $G(D)$ form a $p$-generator sequence, $p \, g_k (D)=0$ and therefore $p \, row_k(G_{j+1}^c)=0$.
    %From condition 2. above it follows immediately that
%    $$
%    \left[\begin{array}{cccc} g^0_k & g^1_k & \cdots & g^{\nu_k}_k \end{array}\right]=0,
%    $$
%    which proves the result.
    \eind \pfend

\textbf{\emph{Proof of Theorem \ref{dcj}}}:
Let $\widetilde G(D) \in \ZZ^{k \times n}[D]$ be a generator matrix of $\C$ as in (\ref{eq:decomposition}) with $\widehat{G}(D)$ in (\ref{eq:decomposition_frr}) full row rank and such that $\widehat{G}(0)$ is also full row rank. Let us consider the $p$-encoder
$$G(D)=\left[
\begin{array}{c}
\widetilde{\mathcal{G}}_0(D) \\
p \, \widetilde{\mathcal{G}}_0(D) \\
p \, \widetilde{\mathcal{G}}_1(D) \\
p^2 \, \widetilde{\mathcal{G}}_0(D) \\
p^2 \, \widetilde{\mathcal{G}}_1(D) \\
p^2 \widetilde{\mathcal{G}}_2(D) \\
\vdots \\
p^{r-1} \, \widetilde{\mathcal{G}}_0(D) \\
\vdots \\
p^{r-1} \widetilde{\mathcal{G}}_{r-1}(D) \\
\end{array}
\right]=\sum_{i \in \mathbb N_0} G_i D^i.
$$
Since $\widehat{G}(0)$ is full row rank, $G(D)$ is delay-free. Moreover, the last $k_0+k_1+ \cdots + k_{r-1}$ rows of $G(D)$ belong to $p^{r-1}\mathbb Z_{p^r}^n[D]$ which implies that the last $k_0+k_1+ \cdots + k_{r-1}$ rows of $G_i$ belong to $p^{r-1}\mathbb Z_{p^r}^n$, for all $i$.

Let us consider the truncated sliding generator matrix $G^c_j$ to obtain
$$
d^c_j=d^c_j(G)=\min\{\wt(v): v=u\,  G^c_j , u=[u_0 \dots u_j], u_0 \neq 0, u_i \in \mathcal{A}_p^k, i=0, \dots, j\}.
$$
%By Lemma \ref{l329}, the rows of $\widetilde G_0$ form a $p$-basis and then
We can assume without loss of generality that $G_0$ is in $p$-standard form as in (\ref{p-standG0}),
with parameters $k_0, k_1, \dots, k_{r-1}$.

Consider $u=\left[
  \begin{array}{cccc}
   u_0 & u_1 & \cdots & u_j \\
   \end{array}
\right], \;\; u_i \in \mathcal{A}^k_p$, $i=0,\dots,j$ with $u_0=
\left[
  \begin{array}{ccccc}
    0 & 0 & \dots & 0 & 1 \\
   \end{array}
\right]
$ and $v=u G^c_j = \left[\begin{array}{cccc}
   v_0 & v_1 & \cdots & v_j
   \end{array}
\right]$ with $v_i \in \mathbb{Z}^n_{p^r}$, $i=0,\dots,j$.

Then $v_0=u_0  G_0 =
    \left[
      \begin{array}{ccccc}
        0 & \dots & 0 & 1 & p^{r-1}A^{r-1,k}_{r,r-1} \\
      \end{array}
    \right],
$
where $A^{r-1,k}_{r,r-1}$ represents the last row of $A^{r-1}_{r,r-1}$ as in (\ref{p-standG0}). Then
$$
    \wt(v_0) \leq n-(k_0+k_1+ \dots +k_{r-1})+1.
$$
%and therefore,
%$$
%    d^c_0 \leq n-(k_0+k_1+ \dots +k_{r-1})+1.
%$$
%\\
Note that $v_1=p^{r-1}g_1 + u_iG_0$ where $p^{r-1}g_1$ represents the last row of $G_1$.
Write $g_1$ as
$$
    g_1=
    \left[
      \begin{array}{ccccc}
        g_{1,k_0} & g_{1,k_1} & \dots & g_{1,k_{r-1}} & g_{1,n-(k_0+ \dots +k_{r-1})} \\
      \end{array}
    \right],
$$
with $ g_{1,i} \in \ZZ^i$, $i=k_0, k_1, \dots, k_{r-1}$ and
$g_{1,n-(k_0+ \dots +k_{r-1})} \in \ZZ^{n-(k_0+ \dots +k_{r-1})}$.\\

Let us consider $u_1$ with:
\begin{description}
      \item[-] its first $[(r-1)k_0+(r-2)k_1+ \dots + k_{r-2}]$ components equal to zero;
      \item[-] the remaining $k_0+k_1+ \dots + k_{r-1}$ components equal to
    $$\left[
      \begin{array}{cccc}
        \alpha_{1,k_0} & \alpha_{1,k_1} & \cdots & \alpha_{1,k_{r-1}} \\
      \end{array}
    \right],
    $$
    where $\alpha_{1,k_i} \in\A^i$ are such that $-p^{r-1} \, g_{1,k_i}=p^{r-1}\alpha_{1,k_i}$, $i=0, \dots, r-1$.
\end{description}

So, we obtain $v_1$ with its first $(k_0+k_1+ \dots + k_{r-1})$ elements equal to zero, and therefore
\begin{eqnarray*}
    % \nonumber % Remove numbering (before each equation)
      \wt(v_1) & \leq & n-(k_0+k_1+ \dots + k_{r-1}).
\end{eqnarray*}
In the same way,
$$
    v_2=p^{r-1} \, g_2+u_1 G_1+u_2 G_0
$$
where $ p^{r-1} \, g_2$ represent the last row
of $G_2$ and $u_1 G_1 \in p^{r-1}\ZZN$. Take $u_2$ such that:
\begin{description}
  \item[-] its first $[(r-1)k_0+(r-2)k_1+ \dots + k_{r-2}]$ components are zero;
  \item[-] the remaining $(k_0+k_1+ \dots + k_{r-1})$ components are equal to
 $$\left[
      \begin{array}{cccc}
        \alpha_{2,k_0} & \alpha_{2,k_1} & \cdots & \alpha_{2,k_{r-1}} \\
      \end{array}
    \right],
    $$
    where $\alpha_{2,k_i} \in\A^i$ are such that $-p^{r-1} \, \tilde g_{2,k_i}=p^{r-1}\alpha_{2,k_i}$, $i=0, \dots, r-1$, where $\left[
      \begin{array}{cccc}
        p^{r-1} g_{2,k_0} & \ p^{r-1} g_{2,k_0} & \cdots &  p^{r-1} g_{2,k_{r-1}}
      \end{array}
    \right]$ represent the first $k_0+k_1+ \cdots + k_{r-1}$ components of $p^{r-2}g_2 + u_1 G_1$.
\end{description}
As before, the first $k_0+k_1 + \cdots + k_{r-1}$ elements of $v_2$ are zero and therefore
$$
\wt(v_2) \leq n-(k_0+k_1+ \dots + k_{r-1}).
$$

Applying the same reasoning we construct $u_i \in \A^k$ such that $\wt(v_i) \leq n-(k_0+k_1+ \dots + k_{r-1})$, $i=3, \dots,j$ and therefore
\begin{equation*}
d^c_j \leq (j+1)n-(j+1)(k_0+k_1+ \dots + k_{r-1})+1.
\end{equation*}\eind \pfend

\bibliographystyle{plain}
%\bibliography{/Dropbox/biblio_com_tudo}
%/home/jcliment/Documents/Investigacio/References/Ref-Names,%
%/home/jcliment/Documents/Investigacio/References/Ref-Articles_A-K,%
%/home/jcliment/Documents/Investigacio/References/Ref-Articles_L-Z,%
%/home/jcliment/Documents/Investigacio/References/Ref-PhDT-MT,%
%/home/jcliment/Documents/Investigacio/References/Ref-Books,%
%/home/jcliment/Documents/Investigacio/References/Ref-Climent-2%
%}

 %%%%%%%%%%%%%%%%%%%%

\bibliography{biblio_com_tudo}
\end{document}